\newtheorem{newstatement}{newstatement}
\newtheorem{definition}[newstatement]{Definition}
\newtheorem{lemma}[newstatement]{Lemma}
\newtheorem{theorem}[newstatement]{Theorem}
\newtheorem{corollary}[newstatement]{Corollary}
\newtheorem{remark}[newstatement]{Remark}
\newtheorem{proposition}[newstatement]{Proposition}
\def\fieldstyle{\Bbb} 
\def\p_#1{p_{\kern-1.5pt_#1}}
\def\emph#1{{\em #1}}
\def\textbf#1{{\bf #1}}
\def\MR#1{\relax}
\newcommand{\Cl}{\mathop{\mathrm{Cl}}\nolimits} 
\newcommand{\Int}{\mathop{\mathrm{Int}}\nolimits} 
\newcommand{\Bd}{\partial}
\newcommand{\Map}{\mathop{\cal M}\nolimits}
\newcommand{\M}{\mathop{\cal M}\nolimits}
\newcommand{\sign}{\mathop{\mathrm{sign}}\nolimits}
\newcommand{\R}{\fieldstyle R}
\newcommand{\C}{\fieldstyle C}
\newcommand{\CP}{\C\mathrm P}
\def\(#1){({\em #1\/})}
\def\varemptyset{{\text{\raise.21ex\hbox{$\not$}}\mkern.15mu\mathrm{O}\mkern.15mu}}
\let\emptyset\varemptyset
\let\geq\geqslant
\let\leq\leqslant
\let\phi\varphi
\let\epsilon\varepsilon
\renewcommand{\paragraph}{\@startsection%
{paragraph}
{4}
{0mm}
{\bigskipamount}
{-1.25ex}
{\normalsize\bf}}
\def\provedboxcontents#1{$\square$}
\def\ft{\@ifnextchar[{\ft@s}{\ft@}}
\def\ft@{\ft@@@s[\f@size]}
\def\ft@s[{\@ifnextchar{a}{\ft@sz[}{\ft@@s[}}
\def\ft@@s[{\@ifnextchar{s}{\ft@sz[}{\ft@@@s[}}
\def\ft@@@s[#1]{\ft@sz[at #1pt]}
\def\ft@sz[#1]#2{\font\fonttemp=#2 #1\fonttemp\ignorespaces}
\begin{document}
\def\calstyle#1{{\text{\ft{eusm10}#1}}}

\let\cal\calstyle

\title[]{\bf\large UNIVERSAL LEFSCHETZ FIBRATIONS\\[6pt] OVER BOUNDED 
SURFACES}

\author{Daniele Zuddas}

\address{Universit\`a di Cagliari\\
Dipartimento di Matematica e Informatica\\
Via Ospedale 72\\ 09124 Cagliari (Italy)}

\email{d.zuddas@gmail.com}

\thanks{I would like to thanks Andrea Loi for helpful conversations and 
suggestions about the manuscript. Thanks to Regione Autonoma della Sardegna for support with funds from PO Sardegna FSE~2007--2013 and L.R.~7/2007 ``Promotion of scientific research and technological innovation in Sardinia''. Also thanks to ESF for short visit grants within the program ``Contact and Symplectic Topology".}

\date{}

\subjclass[2000]{Primary 55R55; Secondary 57N13}

\begin{abstract}
	In analogy with the vector bundle theory  we define universal  and strongly universal Lefschetz fibrations over bounded surfaces. After giving a characterization of these fibrations we construct very special strongly universal Lefschetz fibrations when the fiber is the torus or an orientable surface with connected boundary and the base surface is the disk. As a by-product we also get some immersion results for $4$-dimensional $2$-handlebodies.
	
	\medskip\noindent
	{\sc Keywords:} universal Lefschetz fibration, Dehn twist, 4-manifold.
\end{abstract}
\maketitle

\section*{Introduction}

Consider a smooth 4-manifold $V$ and a surface $S$. Let $f : V \to S$ be a (possibly achiral) smooth Lefschetz fibration with singular values set $A_f\subset S$ and regular fiber $F \cong F_{g,b}$, the compact connected orientable surface of genus $g$ with $b$ boundary components. Let $G$ be another surface. We assume that $V$, $S$ and $G$ are compact, connected and oriented with (possibly empty) boundary.

\begin{definition}
	We say that a smooth map $q : G \to S$ with regular values set $R_q \subset S$ is $f$-regular if $q(\Bd G) \cap A_f = \emptyset$ and $A_f \subset R_q$.
\end{definition}

In other words, $q$ is $f$-regular if and only if $q$ and $q_{|\Bd G}$ are transverse to $f$.

If $q$ is $f$-regular then $\widetilde V = \{(g, v) \in G \times V \; |\; q(g) = f(v)\}$ is a 4-manifold and the map $\widetilde f :\widetilde V \to G$ given by $\widetilde f(g, v) = g$ is a Lefschetz fibration. Moreover $\widetilde q : \widetilde V \to V$, $\widetilde q(g, v) = v$, is a fiber preserving map which sends each fiber of $\widetilde f$ diffeomorphically onto a fiber of $f$, so the regular fiber of $\widetilde f$ can be identified with $F$. We get the following commutative diagram

\medskip\centerline{\includegraphics{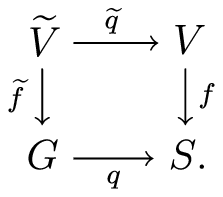}}\medskip

We say that $\widetilde f : \widetilde V \to G$ is the {\em pullback} of $f$ by $q$ and we make use of the notation $\widetilde f = q^*(f)$.

Two Lefschetz fibrations $f_1 : V_1 \to S_1$ and $f_2 : V_2 \to S_2$ are said 
{\em equivalent} if there are orientation-preserving diffeomorphisms $\phi : S_1 \to S_2$ and $\widetilde \phi : V_1 \to V_2$ such that $\phi \circ f_1 = f_2 \circ \widetilde\phi$. The equivalence class of $f$ will be indicated by $[f]$.

We say that a Lefschetz fibration $f$ is {\em allowable} if the vanishing cycles of $f$ with respect to a (and hence to any) Hurwitz system for $A_f$ are homologically essential in $F$. We consider only allowable Lefschetz fibrations, if not differently stated.

Given $f$ we define the set $L(f) = \{[q^*(f)]\}$ where $q$ runs over the $f$-regular maps $q : G \to S$ and $G$ runs over the compact, connected, oriented surfaces. 
Analogously, we define the set $SL(f) = \{[q^*(f)]\} \subset L(f)$ where $q$ runs over the $f$-regular orientation-preserving immersions $q : G \to S$, with $G$ as above.

\begin{definition}\label{univ/def}
	A Lefschetz fibration $u : U \to S$ with regular fiber $F$ is said universal (resp. strongly universal) if every  Lefschetz fibration over a surface with non-empty boundary and with regular fiber diffeomorphic to $F$ belongs to a class of $L(u)$ (resp. $SL(u))$.
\end{definition}

In other words $u$ is universal (resp. strongly universal) if and only if any Lefschetz fibration over a bounded surface and with the same fiber can be obtained as the pullback of $u$ by a $u$-regular map (resp. immersion).
Note that this notion of universality is analogous to that in the theory of vector bundles \cite{MS74}.

We denote by $\M_{g,b}$ the mapping class group of $F_{g,b}$ whose elements are the isotopy classes of orientation-preserving diffeomorphisms of $F_{g,b}$ which keep the boundary fixed pointwise (assuming isotopy through such diffeomorphisms).

It is well-known that for a Lefschetz fibration $f : V \to S$ with regular fiber $F_{g,b}$ the monodromy of a meridian\footnote{A meridian of a finite subset $A \subset \Int S$ is an element of $\pi_1(S - A)$ which can be represented by the oriented boundary of an embedded disk in $S$ which intersects $A$ in a single interior point, cf. Definition~\ref{immers-merid/def}.} of $A_f$ is a Dehn twist $\gamma \in \M_{g,b}$.

If $S$ is not simply connected, the monodromy of an element of $\pi_1(S - A_f)$ which is not a product of meridians is not necessarily the identity on $\Bd F_{g,b}$, and so it induces a permutation on the set of boundary components of $F_{g,b}$. We will denote by $\varSigma_b$ the permutation group of this set.

These considerations allow us to define three kind of monodromies.
Let $H_f \lhd \pi_1(S - A_f)$ be the smallest normal subgroup of $\pi_1(S - A_f)$ which contains all the meridians of $A_f$.
The {\em Lefschetz monodromy} of $f$ is the group homomorphism $\omega_f : H_f  \to \M_{g,b}$ which sends a meridian of $A_f$ to the corresponding Dehn twist in the standard way \cite{GS99}.

Let $\widehat \M_{g,b}$ be the extended mapping class group of $F_{g,b}$, namely the group of all isotopy classes of orientation-preserving self-diffeomorphisms of $F_{g,b}$.
The {\sl bundle monodromy} $\widehat\omega_f : \pi_1(S - A_f) \to \widehat \M_{g,b}$ is the monodromy of the locally trivial bundle $f_| : V - f^{-1}(A_f) \to S - A_f$.

We consider also the natural homomorphism $\sigma : \widehat\M_{g,b} \to 
\varSigma_b$ which sends an isotopy class to the permutation induced on the set of boundary components. The composition $\sigma \circ\widehat \omega_f$ passes to the quotient $\pi_1(S - A_f)/H_f \cong \pi_1(S)$ and gives a homomorphism $\omega_f^\sigma : \pi_1(S) \to \varSigma_b$ which we call the {\sl permutation monodromy} of $f$.

Let $\cal C_{g,b}$ be the set of equivalence classes of homologically 
essential simple closed curves in $\Int F_{g,b}$ up to orientation-preserving homeomorphisms of $F_{g,b}$.
It is well-known that $\cal C_{g,b}$ is finite. Moreover $\#\,\cal C_{g,b} = 1$ for $g \geq 1$ and $b \in \{0, 1\}$ \cite[Chapter~12]{L97}.

Now we state the main results of the paper.
In the following proposition  we  characterize the universal and strongly universal Lefschetz fibrations.

\begin{proposition}\label{main/thm}
	A Lefschetz fibration $u : U \to S$ with regular fiber $F_{g,b}$ is universal (resp. strongly universal)  if and only if the following two conditions $(1)$ and $(2)$ (resp. $(1)$ and $(2'))$ are satisfied:
	\begin{itemize}
		\item [$(1)$] $\omega_u$ and $\omega_u^\sigma$ are surjective;
		\item [$(2)$] any class of $\cal C_{g,b}$ can be represented by a vanishing cycle of $u$;
		\item [$(2')$]  any class of $\cal C_{g,b}$ contains at least two vanishing cycles of $u$ which correspond to singular points of opposite signs.
	\end{itemize}
	
	In particular, if $b \in \{0, 1\}$ and $\omega_u$ is surjective then $u$ is universal. If in addition $u$ admits a pair of opposite singular points, then $u$ is strongly universal.
\end{proposition}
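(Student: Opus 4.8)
The plan is to reduce both equivalences to a single monodromy dictionary for the pullback operation, and then to a geometric realization of prescribed $\pi_1$-homomorphisms by $u$-regular maps. From the fiber-product description $\widetilde V=\{(g,v)\;|\;q(g)=f(v)\}$ one reads off directly that $q^*(u)$ has base $G$, singular values $q^{-1}(A_u)$, and bundle monodromy $\widehat\omega_{q^*(u)}=\widehat\omega_u\circ q_*$, where $q_*:\pi_1(G-q^{-1}(A_u))\to\pi_1(S-A_u)$; at a point $p\in q^{-1}(a)$ the vanishing cycle of $q^*(u)$ is the fiber-identified image of that of $u$ at $a$, and its sign is the sign of $u$ at $a$ times the local degree $\deg_p q=\pm1$. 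Since a Lefschetz fibration over a bounded base is determined up to equivalence by its base, its singular set, and its bundle monodromy (a homomorphism sending meridians to Dehn twists), realizing a target $f:V\to S'$ as a pullback amounts to producing a $u$-regular map $q$ with $q^{-1}(A_u)=A_f$ and $\widehat\omega_u\circ q_*=\widehat\omega_f$. I will also use that condition $(1)$ forces $\widehat\omega_u$ surjective: from the exact sequence $1\to\ker\sigma\to\widehat\M_{g,b}\xrightarrow{\sigma}\varSigma_b\to1$ together with $\ker\sigma=\Im(\M_{g,b}\to\widehat\M_{g,b})$, surjectivity of $\omega_u$ gives $\widehat\omega_u(H_u)=\ker\sigma$, and surjectivity of $\omega_u^\sigma$ then upgrades this to $\Im\widehat\omega_u=\widehat\M_{g,b}$.

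For necessity I would test universality against elementary fibrations. Given $c\in\cal C_{g,b}$, the fibration over the disk with one positive singular point of vanishing cycle $c$ is allowable and must equal some $q^*(u)$; having one singular point, $q$ meets $A_u$ once, over a singular value whose vanishing cycle is homeomorphic to $c$, proving $(2)$. In the strongly universal case $q$ is an orientation-preserving immersion, so $\deg_p q=+1$ and the sign is inherited from $u$; testing against the positive and the negative elementary fibration of cycle $c$ then forces both a positive and a negative vanishing cycle in the class of $c$, which is $(2')$. Surjectivity of $\omega_u^\sigma$ follows by testing against a singularity-free $F_{g,b}$-bundle over the annulus with prescribed permutation monodromy $\tau$, whose core loop pulls back to an element of $\pi_1(S)$ realizing $\tau$. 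Finally, writing an arbitrary $\alpha\in\M_{g,b}$ as a product of Dehn twists along homologically essential curves and realizing it as the Lefschetz monodromy of a disk fibration, the equivalence with a pullback places $\alpha$ in $\Im\omega_u$ (the Lefschetz monodromy of any pullback lies in $\Im\omega_u$ by the conjugation rule for monodromies), so $\omega_u$ is surjective.

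For sufficiency assume $(1)$ and $(2)$ (resp.\ $(2')$) and let $f:V\to S'$ have singular values $A_f=\{a_1,\dots,a_n\}$, vanishing cycles $c_i$ and signs $\epsilon_i$. As $S'-A_f$ is bounded, $\pi_1(S'-A_f)$ is free on meridians $\mu_1,\dots,\mu_n$ and handle/boundary generators, so it suffices to prescribe $q_*$ on these. On each handle or boundary generator $\gamma$ set $q_*(\gamma)=w_\gamma$ with $\widehat\omega_u(w_\gamma)=\widehat\omega_f(\gamma)$, using surjectivity of $\widehat\omega_u$. For a meridian, $(2)$ supplies a singular value $a_i'$ of $u$ whose vanishing cycle $c_i'$ lies in the class of $c_i$; choosing $v_i$ with $\widehat\omega_u(v_i)$ carrying $c_i'$ to $c_i$ (again by surjectivity of $\widehat\omega_u$), set $q_*(\mu_i)=v_i\mu_i'v_i^{-1}$, so that $\widehat\omega_u(q_*\mu_i)$ is the positive twist $t_{c_i}$. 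Since $\pi_1(S'-A_f)$ is free, $q_*$ extends to a homomorphism, and since $S-A_u$ is aspherical it is induced by a map $S'-A_f\to S-A_u$, which extends over each puncture $a_i$ to a map $q:S'\to S$ with $q(a_i)=a_i'$ and $q^{-1}(A_u)=A_f$ (because $q_*\mu_i$ is conjugate to the meridian $\mu_i'$). After perturbing $q$ to be $u$-regular, $q^*(u)$ has base $S'$, singular set $A_f$, bundle monodromy $\widehat\omega_u\circ q_*=\widehat\omega_f$ and the correct vanishing cycles, hence $q^*(u)\cong f$. To match $\epsilon_i$: in the universal case let $q$ fold at $a_i$ (local degree $-1$) when $\epsilon_i=-1$, flipping the inherited positive sign; in the strongly universal case $q$ is an orientation-preserving immersion, so $\deg_{a_i}q=+1$ and the sign is inherited, whence one picks $a_i'$ from the positive or negative representative provided by $(2')$ according to $\epsilon_i$.

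The main obstacle is this last geometric realization step: turning the algebraically defined $q_*$ into an honest $u$-regular map, and in the strong case an orientation-preserving immersion, with prescribed preimage set $q^{-1}(A_u)=A_f$ and prescribed local degrees (folds being forbidden in the immersion case). This needs a careful local model near each $a_i$ and a controlled extension over the handles of $S'$, using the flexibility of immersions of a bounded surface into a surface with boundary; it is precisely here that the orientation-preserving constraint removes the freedom to correct signs, explaining why $(2')$ must replace $(2)$. The final ``in particular'' statement is then immediate: for $b\in\{0,1\}$ the group $\varSigma_b$ is trivial, so $\omega_u^\sigma$ is automatically surjective, and $\#\,\cal C_{g,b}=1$, so $(2)$ holds as soon as $u$ has a vanishing cycle while $(2')$ holds as soon as $u$ has a pair of opposite singular points; thus surjectivity of $\omega_u$ alone yields universality, and an additional pair of opposite singular points yields strong universality.
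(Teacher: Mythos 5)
Your overall architecture genuinely mirrors the paper's proof: the same pullback dictionary ($A_{q^*(u)}=q^{-1}(A_u)$, $\widehat\omega_{q^*(u)}=\widehat\omega_u\circ q_*$, signs multiplied by local degrees), the same deduction of surjectivity of $\widehat\omega_u$ from condition $(1)$ via the exact sequence $\M_{g,b}\to\widehat\M_{g,b}\to\varSigma_b\to 0$, the same conjugation/immersed-meridian bookkeeping for necessity (your elementary test fibrations are a harmless variant of the paper's single test fibration satisfying $(1)$ and $(2)$), and the same mechanism for $(2)$ versus $(2')$. But your sufficiency argument has a genuine gap exactly where you flag ``the main obstacle'': you never construct the $u$-regular map. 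Two concrete failure points. First, realizing the homomorphism $q_*$ by a map $S'-A_f\to S-A_u$ via asphericity and then ``perturbing $q$ to be $u$-regular'' does not give $q^{-1}(A_u)=A_f$: a generic perturbation achieves transversality but can leave extra preimages of $A_u$, and each extra preimage point is an extra singular value of $q^*(u)$, so the pullback would not be equivalent to $f$. The parenthetical ``because $q_*\mu_i$ is conjugate to the meridian'' only controls behavior at the intended punctures, not the absence of other preimages. Second, in the strong case you need the realizing map to be an orientation-preserving \emph{immersion} with local degree $+1$ at each $a_i$; upgrading a continuous $\pi_1$-realization to such an immersion is not automatic and would require immersion theory with added compatibility constraints. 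The paper avoids both problems by building $q$ explicitly, handle by handle: a disk $D=D_0\cup D_1\cup\cdots\cup D_k$ carrying the Hurwitz system of $A_f$, mapped so that each $D_i$ is immersed onto an immersed disk $B_i$ bounding the immersed meridian $\beta_i=\alpha_i\xi_{j_i}\alpha_i^{-1}$ (meeting $A_u$ in exactly one point), the $1$-handles immersed onto neighborhoods of loops $\eta_i'$ realizing $\widehat\omega_f(\eta_i)$, with the permuting moves $t,t',t''$ arranging the intersection order along $B_0$; this makes $q^{-1}(A_u)=A_f$ and the immersion property true by construction. Sign correction is then done by the explicit twisting maps $r_i(t_1,t_2)=(t_1t_2,t_2)$, which are orientation-reversing near the singular preimage and satisfy $r_{i*}(\zeta_i)=\zeta_i^{-1}$; under $(2')$ one chooses $\epsilon_{j_i}=\sigma_i$ so no $r_i$ is needed and $q$ stays an orientation-preserving immersion.

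A second, subtler omission: your bookkeeping is carried out entirely in terms of the bundle monodromy $\widehat\omega$, but $\mu:\M_{g,b}\to\widehat\M_{g,b}$ is not injective for $b\geq 1$ (twists about boundary-parallel curves die in $\widehat\M_{g,b}$, and such curves can be homologically essential when $b\geq 2$, hence occur as vanishing cycles of allowable fibrations). Matching $\widehat\omega_f=\widehat\omega_u\circ q_*$ therefore does not by itself force $\omega_f=\omega_u\circ q_*$ on meridians, which is what the uniqueness statement requires. This is precisely the case distinction handled in the paper's Lemma~\ref{van-cic/lem} (the case $\gamma_i=\mathrm{id}$ in $\widehat\M_{g,b}$, where one argues both curves are boundary parallel, hence in the same class of $\cal C_{g,b}$); your proposal needs this lemma, or an equivalent argument, to make the choice of $c_{j_i}$ and $\lambda_i$ legitimate class by class.
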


As a remarkable simple consequence we have that (strongly) universal Lefschetz fibrations actually exist for any regular fiber $F_{g,b}$. Moreover, the surjectivity of $\omega^\sigma_u$ implies $b_1(S) \geq ($the minimum number of generators of $\varSigma_b)$, and this inequality is sharp. So we can assume that the base surface $S$ of a universal Lefschetz fibration is the disk for $b \leq 1$, the annulus for $b = 2$, and such that $b_1(S) = 2$ for $b \geq 3$.

Consider a knot $K \subset S^3$ and let $M(K, n)$ be the oriented 4-manifold obtained from $B^4$ by the addition of a 2-handle along $K$ with framing $n$.

In the following theorem we construct very special strongly universal Lefschetz fibrations when the fiber is the torus or $F_{g, 1}$ with $g\geq 1$. 

\begin{theorem}\label{strong-univ/thm}
	There is a strongly universal Lefschetz fibration $u_{g,b} : U_{g,b} \to B^2$ with fiber $F_{g,b}$ and with
	
	\smallskip
	\indent $U_{1,1} \cong  B^4$,\\
	\indent $U_{g, 1}  \cong M(O, 1) \text{ for } g \geq 2$, and\\
	\indent $U_{1,0} \cong M(E, 0)$,
	\smallskip
	
	\noindent where $O$ and $E$ denote respectively the trivial  and the figure eight knots in $S^3$.
\end{theorem}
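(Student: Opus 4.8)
The plan is to realize, in each of the three cases, the hypotheses of the last assertion of Proposition~\ref{main/thm} by choosing an explicit monodromy factorization, and then to identify the total space by Kirby calculus. First I would set up the reduction. Since the base is $B^2$, which is simply connected, and $b\in\{0,1\}$ forces the permutation group $\varSigma_b$ to be trivial, the permutation monodromy $\omega^\sigma_{u_{g,b}}:\pi_1(B^2)\to\varSigma_b$ is automatically surjective, and the unique class of $\cal C_{g,b}$ is at our disposal by the fact (recalled above) that $\#\,\cal C_{g,b}=1$ for $g\geq1$, $b\in\{0,1\}$. A Lefschetz fibration over $B^2$ is encoded by a Hurwitz factorization of signed Dehn twists, and $\pi_1(B^2-A_{u_{g,b}})$ is freely generated by the meridians, so $H_{u_{g,b}}=\pi_1(B^2-A_{u_{g,b}})$ and $\omega_{u_{g,b}}$ is surjective precisely when the Dehn twists along the vanishing cycles generate $\M_{g,b}$. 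Hence, by the ``in particular'' clause of Proposition~\ref{main/thm}, it suffices in each case to exhibit a factorization whose twists generate $\M_{g,b}$ and which contains both a positive and a negative singular point; this realizes condition $(2')$, since all vanishing cycles are non-separating, hence lie in the unique class of $\cal C_{g,b}$.

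For the choice of factorizations I would take, for $F_{1,1}$ and for the torus $F_{1,0}$, the two curves $a,b$ meeting transversally once, whose twists generate $\M_{1,1}$, respectively $\M_{1,0}\cong\mathrm{SL}_2(\Z)$; and for $F_{g,1}$ with $g\geq2$ a Humphries system of $2g+1$ curves (a chain of $2g$ curves filling $F_{g,1}$ together with one additional curve), whose twists are well known to generate $\M_{g,1}$ \cite{L97}. In every case I assign signs to the singular points so that both signs occur: since $\langle t_c\rangle=\langle t_c^{-1}\rangle$, flipping a sign does not change the subgroup generated by the twists, so surjectivity of $\omega_{u_{g,b}}$ is preserved while $(2')$ is secured. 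This already yields strong universality; it remains only to compute the total space.

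For the identification of $U_{g,b}$ I would present $F_{g,b}\times B^2$ as a handlebody. The surface $F_{g,1}$ has one $0$-handle and $2g$ $1$-handles, so $F_{g,1}\times B^2$ is $B^4$ with $2g$ $1$-handles, whereas $F_{1,0}\times B^2=T^2\times B^2$ carries in addition the $2$-handle coming from the $2$-cell of the torus, attached along a commutator with framing $0$. Each vanishing cycle contributes a $2$-handle attached along the corresponding curve in a fiber with framing $\mp1$ relative to the fiber framing according to the sign of its singular point. Now each chain curve runs geometrically once over one of the $1$-handles, so the $2g$ $1$-handles can be cancelled against $2g$ of the vanishing-cycle $2$-handles, a cancellation that is insensitive to framings. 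For $F_{1,1}$ the two $2$-handles cancel both $1$-handles and nothing remains, giving $U_{1,1}\cong B^4$. For $F_{g,1}$ with $g\geq2$ exactly one $2$-handle survives, and I would show by explicit handle slides that its attaching circle is an unknot, with the signs arranged so that its framing is $+1$ and an opposite pair still occurs among the cancelling chain handles, whence $U_{g,1}\cong M(O,1)$. For the torus the two $1$-handles cancel the two vanishing-cycle $2$-handles and the surviving handle is the torus $2$-cell; tracking it through the slides I would identify its attaching circle with the figure-eight knot $E$ and its framing with $0$, giving $U_{1,0}\cong M(E,0)$.

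The main obstacle will be this last Kirby computation: carrying out the handle slides explicitly and proving that the residual attaching circles are exactly the $+1$-framed unknot and the $0$-framed figure-eight, with careful bookkeeping of the sign and framing conventions, since sliding the surviving handle over a sign-flipped chain handle alters its framing. The figure-eight, being amphichiral, is a natural outcome of a factorization that necessarily uses opposite signs. Conceptually, the dichotomy between $U_{1,1}\cong B^4$ and $U_{g,1}\cong M(O,1)$ reflects precisely that $\M_{1,1}$ is generated by only $2g=2$ twists, whereas for $g\geq2$ the minimal Humphries system has $2g+1$ twists, producing one extra $2$-handle that the $2g$ $1$-handles of the fiber cannot absorb.
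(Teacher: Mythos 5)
Your proposal is correct and follows essentially the same route as the paper: strong universality is obtained from the ``in particular'' clause of Proposition~\ref{main/thm} applied to a $(2g+1)$-curve generating system (resp.\ the pair $a,b$ for $g=1$) with one sign flipped, and the total spaces are identified by cancelling the fiber's $1$-handles against vanishing-cycle $2$-handles --- which the paper organizes as fiberwise destabilizations, necessarily stopping at an $F_{0,3}$-fibration before finishing with non-fibered handle moves --- followed by Kirby calculus. The step you defer as the ``main obstacle'', namely the explicit slides identifying the residual attaching circles as the $+1$-framed unknot and the $0$-framed figure-eight (the latter via the boundary of the fiber $F_{1,1}\subset S^3$ with its Seifert framing), is precisely the content of the paper's Kirby-calculus figures, so nothing in your outline would fail.
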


\begin{corollary}\label{paral/cor}
	Let $f : V \to B^2$ be a Lefschetz fibration with fiber of genus one. Suppose that no vanishing cycle of $f$ disconnects the regular fiber with respect to some (and hence to any) Hurwitz system. Then $V$ immerses in $\R^4$ and so is parallelizable.
\end{corollary}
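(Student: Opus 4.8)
The plan is to reduce Corollary~\ref{paral/cor} to Theorem~\ref{strong-univ/thm} by realizing $f$ as a pullback of a strongly universal fibration whose total space is known to immerse in $\R^4$, and then invoke the classical fact that an open immersion into $\R^4$ trivializes the tangent bundle. First I would observe that $V$ has nonempty boundary (the base is $B^2$), so by Definition~\ref{univ/def} and the strong universality part of Theorem~\ref{strong-univ/thm}, the fibration $f$ belongs to a class of $SL(u_{g,b})$ for the appropriate genus-one fiber. The hypothesis that no vanishing cycle disconnects the fiber means that every vanishing cycle is homologically essential, so $f$ is allowable and the relevant fiber is either the torus $F_{1,0}$ or a once-bounded genus-one surface $F_{1,1}$ (after possibly capping or analyzing the boundary); in either case $\#\,\cal C_{1,b}=1$ and the universal model $u_{1,b}$ from Theorem~\ref{strong-univ/thm} applies. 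Thus there is a $u_{1,b}$-regular orientation-preserving immersion $q : B^2 \to B^2$ with $f \cong q^*(u_{1,b})$.

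Next I would use the explicit total spaces from Theorem~\ref{strong-univ/thm}: $U_{1,1}\cong B^4$ and $U_{1,0}\cong M(E,0)$, the trace of a $0$-framed $2$-handle attachment along the figure-eight knot. The key geometric input is that each of these four-manifolds admits an immersion into $\R^4$. For $U_{1,1}\cong B^4$ this is immediate. For $U_{1,0}\cong M(E,0)$ one checks that the figure-eight knot with framing $0$ gives a $2$-handlebody that immerses in $\R^4$; this should follow from a handle-by-handle immersion argument, using that a $0$-framed $2$-handle along a knot with the appropriate framing data can be immersed compatibly with the standard immersion of the $0$-handle $B^4$. I would then show that the pullback operation is compatible with immersions: since $q$ is an immersion and $\widetilde q : \widetilde V \to U_{1,b}$ sends fibers diffeomorphically onto fibers, the map $\widetilde q$ (composed with the immersion of $U_{1,b}$ into $\R^4$) yields an immersion of $V\cong\widetilde V$ into $\R^4$. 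The point is that $q$ being a local diffeomorphism on the base forces $\widetilde q$ to be an immersion of total spaces.

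I expect the main obstacle to be verifying that $\widetilde q$ is genuinely an immersion on all of $V$, including near the singular fibers and near $\Bd V$. Away from the singular set this is the statement that the pullback of a submersion by an immersion is an immersion, which is a routine transversality/chain-rule computation using the fiber-product description $\widetilde V=\{(g,v)\mid q(g)=f(v)\}$. At the Lefschetz singular points one must check that the local model of the node is preserved under the (locally diffeomorphic) base change $q$, so that $\widetilde q$ remains an immersion there as well; because $q$ is a local diffeomorphism this should cause no genuine difficulty, but it requires care with the standard Lefschetz chart. Once $V$ immerses in $\R^4$, parallelizability follows formally: an immersion $V\looparrowright\R^4$ pulls back the trivial tangent bundle $T\R^4$, giving a bundle monomorphism $TV\to V\times\R^4$ of equal-rank bundles over a manifold with boundary, hence an isomorphism $TV\cong V\times\R^4$, so $V$ is parallelizable. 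I would close by noting that the immersion hypothesis on vanishing cycles is exactly what places $f$ in the image of the strongly universal model, completing the reduction.
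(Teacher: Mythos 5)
Your proposal handles the cases $b=0$ and $b=1$ essentially as the paper does: pull $f$ back from $u_{1,1}$ or $u_{1,0}$ via an orientation-preserving $u$-regular immersion, use that the fibered map $\widetilde q$ is then an immersion of total spaces (this is exactly the second Proposition at the end of Section~\ref{preliminaries/sec}, so you do not need to re-verify it near the singular fibers), and compose with an immersion $U_{1,1}\cong B^4\subset\R^4$ or $U_{1,0}\cong M(E,0)\looparrowright\R^4$. For $M(E,0)$ the paper is more concrete than you are: it immerses $M(E,0)$ as a tubular neighborhood of $B^4\cup D$, where $D\subset\R^4-\Int B^4$ is a self-transverse immersed disk bounded by the figure-eight knot; your ``handle-by-handle'' sketch is pointing at the same fact but leaves the framing verification implicit.

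The genuine gap is the case $b\geq 2$, which you dismiss with ``after possibly capping or analyzing the boundary.'' The hypothesis is only that the fiber has genus one, so the fiber is $F_{1,b}$ for arbitrary $b\geq 0$, and Theorem~\ref{strong-univ/thm} provides \emph{no} strongly universal fibration with fiber $F_{1,b}$ for $b\geq 2$; moreover $\#\,\cal C_{1,b}>1$ there, so the easy criterion at the end of Proposition~\ref{main/thm} does not apply either. Your claim that ``$f$ belongs to a class of $SL(u_{g,b})$ for the appropriate genus-one fiber'' is therefore unsupported in this range. The paper instead enlarges $V$ to $V'$ by attaching $0$-framed $2$-handles along all but one boundary component of the fiber $F_{1,b}\subset\Bd V$, extends $f$ to a fibration $f':V'\to B^2$ with fiber $F_{1,1}$, and observes that the non-separating hypothesis on the vanishing cycles is exactly what guarantees $f'$ is still allowable (a curve that is homologically essential in $F_{1,b}$ only because it separates boundary components would die after capping). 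Then $V\subset V'$ and $V'$ immerses in $\R^4$ by the $b=1$ case. Without this reduction -- or some substitute for it -- your argument does not prove the corollary as stated. The final step, that an immersion in $\R^4$ gives parallelizability, is fine.
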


By means of Theorem~\ref{strong-univ/thm} we are able to give a new elementary proof of the following corollary. This was known since the work of Phillips \cite{P67} about submersions of open manifolds because there exists a bundle monomorphism $T\, V \to T\, \CP^2$ for any oriented 4-manifold $V$ which is homotopy equivalent to a CW-complex of dimension two (obtained by means of the classifying map to the complex universal vector bundle \cite{MS74}). 

\begin{corollary}\label{immers/cor}
	Any compact oriented 4-dimensional 2-handlebody\,\footnote{Recall that an $n$-dimensional $k$-handlebody is a smooth $n$-manifold built up with handles of index $\leq k$.} admits an o\-ri\-en\-ta\-tion-preserving immersion in $\CP^2$.
\end{corollary}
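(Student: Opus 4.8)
The plan is to present $V$ as a Lefschetz fibration over the disk, recognize it as a pullback of the universal model $u_{g,1}$ by an immersion, and then exploit the explicit description of $U_{g,1}$ provided by Theorem~\ref{strong-univ/thm}. First I would invoke the well-known fact that every compact oriented $4$-dimensional $2$-handlebody $V$ is the total space of an allowable (possibly achiral) Lefschetz fibration $f : V \to B^2$ whose regular fiber has connected boundary, that is $F \cong F_{g,1}$ (cf.~\cite{GS99}). The part of $V$ lying below the Lefschetz $2$-handles is $F_{g,1} \times B^2 \cong \natural^{2g}(S^1 \times B^3)$, so connectedness of the fiber boundary amounts to an even number of $1$-handles, which can be arranged, if necessary, by inserting a cancelling $1$/$2$-handle pair; allowability is arranged by stabilizing the fiber, which also forces $g \geq 1$ since a disk carries no homologically essential curve. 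This realization step is the substantive input and the main obstacle of the proof: everything afterward is formal.

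Since $g \geq 1$ and $b = 1$, Theorem~\ref{strong-univ/thm} provides the strongly universal fibration $u_{g,1} : U_{g,1} \to B^2$. By Definition~\ref{univ/def} we have $[f] \in SL(u_{g,1})$, so there is a $u_{g,1}$-regular orientation-preserving immersion $q : B^2 \to B^2$ together with an orientation-preserving equivalence $V \cong q^*(U_{g,1})$. I then claim that the canonical map $\widetilde q : q^*(U_{g,1}) \to U_{g,1}$ is an orientation-preserving immersion. Writing $q^*(U_{g,1}) \subset B^2 \times U_{g,1}$ and $\widetilde q$ as the second projection, at a point $(x,v)$ the tangent space is $\{(\xi,\eta) : dq_x(\xi) = du_v(\eta)\}$; hence $d\widetilde q$ kills $(\xi,\eta)$ only if $\eta = 0$, forcing $dq_x(\xi) = 0$ and therefore $\xi = 0$ because $q$ is an immersion, so $d\widetilde q$ is injective at the regular points. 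Near the singular points $q$ is a local diffeomorphism, since $A_{u_{g,1}} \subset R_q$ by $u_{g,1}$-regularity, and there $\widetilde q$ is a local diffeomorphism. Composing with the equivalence $V \cong q^*(U_{g,1})$ yields an orientation-preserving immersion $V \to U_{g,1}$.

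Finally I would observe that $U_{g,1}$ embeds orientation-preservingly in $\CP^2$. For $g = 1$ we have $U_{1,1} \cong B^4$, which embeds as a ball. For $g \geq 2$ we have $U_{g,1} \cong M(O,1)$, the trace of a $+1$-framed $2$-handle attached to the unknot; this is the tubular neighbourhood of a line $\CP^1 \subset \CP^2$, namely $\CP^2 \setminus \Int B^4$, which embeds in $\CP^2$. Composing the immersion $V \to U_{g,1}$ obtained above with this embedding produces the desired orientation-preserving immersion $V \to \CP^2$ (an immersion composed with an embedding is again an immersion), completing the proof. I expect the only delicate point to be the realization of an arbitrary $2$-handlebody as an allowable Lefschetz fibration with connected fiber in the first step; the immersion property of $\widetilde q$ and the embedding of $U_{g,1}$ into $\CP^2$ are routine.
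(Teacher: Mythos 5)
Your argument is correct and follows essentially the same route as the paper: realize $V$ as the total space of an allowable Lefschetz fibration over $B^2$ with fiber $F_{g,1}$ (the paper cites Harer, with \cite{LP01} and \cite{EF06} as alternatives), apply Theorem~\ref{strong-univ/thm} to write $f\cong q^*(u_{g,1})$ for an orientation-preserving $u_{g,1}$-regular immersion $q$, observe that the fibered map $\widetilde q$ is then an immersion, and finish via $M(O,1)\subset\CP^2$ as a tubular neighborhood of a line (the paper stabilizes to $g\geq 2$ instead of treating $g=1$ separately, which is immaterial). The one imprecision is your claim that allowability ``is arranged by stabilizing the fiber'': a stabilization leaves the old vanishing cycles unchanged in $F\subset F'$, so a null-homologous one stays null-homologous; allowability must instead be taken directly from the realization theorem being cited.
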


Universal maps in the context of Lefschetz fibrations over closed surfaces can be constructed in a different way. This generalization will be done in a forthcoming paper.

The paper consists of three other sections. In the next one we review some basic material on Lefschetz fibrations needed in the paper. Section~\ref{proof/sec} is dedicated to the proofs of our results, and in Section~\ref{remarks/sec} we give some final remarks and a comment on positive Lefschetz fibrations on Stein compact domains of dimension four. 

Throughout the paper we assume manifolds (with boundary) to be compact, oriented and connected if not differently stated. We will work in the $C^\infty$ category.

\section{Preliminaries}\label{preliminaries/sec}

Let $V$ be a 4-manifold (possibly with boundary and corners) and let $S$ be a surface. 

\begin{definition}\label{lf/def}
	A Lefschetz fibration $f : V \to S$ is a smooth map which satisfies the following three conditions: 
	\begin{enumerate}
		\item the singular set $\widetilde A_f\subset \Int V$ is finite and is mapped injectively onto the singular values set $A_f = f(\widetilde A_f) \subset \Int S$;
		
		\item the restriction $f_| : V - f^{-1}(A_f) \to S - A_f$ is a locally trivial bundle  with fiber a surface $F$ (the regular fiber of $f$);
		
		\item around each singular point $\widetilde a \in \widetilde A_f$, $f$ can be 
		locally expressed as the complex map $f(z_1, z_2) = z_1^2 + z_2^2$ for suitable chosen smooth local complex coordinates.
	\end{enumerate}
\end{definition}

If such local coordinates are orientation-preserving (resp. reversing), then $\widetilde a$ is said a {\em positive (resp. negative) singular point}, and $a = f(\widetilde a) \in A_f$ is said a {\em positive (resp. negative) singular value}. Observe that this positivity (resp. negativity) notion does not depend on the orientation of $S$. Obviously, around a negative singular point there are orientation-preserving local complex coordinates such that $f(z_1, z_2) = z_1^2 + \bar z_2^2$.

Most authors add the adjective `achiral' in presence of negative singular points. We prefer to simplify the terminology and so we do not follow this convention.

The orientations of $V$ and of $S$ induce an orientation on $F$ such that the locally trivial bundle associated to $f$ is oriented. We will always consider $F$ with this canonical orientation.

Let $F_{g,b}$ be the orientable surface of genus $g \geq 0$ with $b \geq 0$ boundary components. A Lefschetz fibration $f : V \to S$  with regular fiber $F = F_{g,b}$ is characterized by the {\em Lefschetz monodromy homomorphism} $\omega_f : H_f \to \Map_{g,b}$, which sends meridians of $A_f$ to Dehn twists, and by the {\em bundle monodromy homomorphism} $\widehat\omega_f : \pi_1(S - A_f) \to \widehat\M_{g,b}$ which is the monodromy of the bundle associated to $f$. Sometimes we use the term `monodromy' by leaving the precise meaning of it to the context.

It is well-known that the monodromy of a counterclockwise meridian of a positive (resp. negative) singular value is a right-handed (resp. left-handed) Dehn twist around a curve in $F$ \cite{GS99}. Such a curve is said to be a {\em vanishing cycle} for $f$.
We recall the following definition.

\begin{definition}\label{hurwitz/def} 
	A Hurwitz system for a cardinality $n$ subset $A \subset \Int S$ is a sequence $(\xi_1, \dots, \xi_n)$ of meridians of $A$ which freely generate $\pi_1(D - A)$ and such that the product $\xi_1 \cdots \xi_n$ is the homotopy class of the oriented boundary of $D$, where $D \subset S$ is a disk such that $A \subset \Int D$ and $*\in \Bd D$.
\end{definition}

If a Hurwitz system $(\xi_1, \dots, \xi_n)$ for $A_f \subset S$ is given, the set $A_f = \{a_1,\dots, a_n \}$ can be numbered accordingly so that $\xi_i$ is a meridian of $a_i$. It is determined a sequence of signed vanishing cycles $(c_1^\pm, \dots, c_n^\pm)$ (the {\em monodromy sequence} of $f$), where $c_i \subset F_{g,b}$ corresponds to the Dehn twist $\omega_f(\xi_i) \in \M_{g,b}$ and the sign of $c_i$ equals that of $a_i$ as a singular value of $f$. 
Clearly, $c_i$ is defined up to isotopy for all $i$. Sometimes the plus signs are understood.

The fact that the $c_i$'s are all homologically (or homotopically) essential in $F$ does not depend on the actual Hurwitz system, and so this is a property of the Lefschetz fibration $f$.

The monodromy sequence of $f : V \to B^2$ determines a handlebody 
decomposition of the total space as $V = (B^2 \times F) \cup H_1^2 \cup \cdots \cup H_n^2$ where $B^2 \times F$ is a trivialization of the bundle associated to $f$ over a subdisk contained in $B^2 - A_f$ and the 2-handle $H_i^2$ is attached to $B^2 \times F$ along the vanishing cycle $\{*_i\} \times c_i \subset S^1 \times F \subset 
\Bd (B^2 \times F)$ for a suitable subset $\{*_1, \dots, *_n\} \subset S^1$ cyclically ordered in the counterclockwise direction \cite{GS99}. The framing of $H^2_i$ with respect to the fiber $\{*_i\} \times F \subset \Bd(B^2 \times F)$ is $-\epsilon_i$ where $\epsilon_i = \pm 1$ is the sign of the singular point $a_i$. 

Note that $B^2 \times F$ can be decomposed as the union of a 0-handle, some 1-handles, and also a 2-handle in case $\Bd F = \emptyset$ starting from a handlebody decomposition of $F$ and making the product with the 2-dimensional 0-handle $B^2$.

In this paper we consider only the so called {\em relatively minimal} Lefschetz fibrations, namely those without homotopically trivial vanishing cycles. Then in our situation the monodromy sequence can be expressed also by a sequence of Dehn twists $(\gamma_1^{\epsilon_1}, \dots, \gamma_n^{\epsilon_n})$, where $\gamma_i = (\omega_f(\xi_i))^{\epsilon_i}$ is assumed to be right-handed.

Let $\mu : \M_{g,b} \to \widehat\M_{g,b}$ be the homomorphism such that $\mu([\phi]) = [\phi]$ for all $[\phi] \in \M_{g,b}$. We have the exact sequence $\M_{g,b} \stackrel\mu\to \widehat\M_{g,b} \stackrel\sigma\to \varSigma_b \to 0$ where $\sigma$ is the boundary permutation homomorphism defined in the Introduction. The monodromy homomorphisms $\omega_f$ and $\widehat\omega_f$ of 
a Lefschetz fibration $f$ satisfy $\widehat\omega_{f| H_f} = \mu \circ \omega_f$.

For a finite subset $A \subset \Int S$ we indicate by $H(S, A) \lhd \pi_1(S - A)$ the normal subgroup generated by the meridians of $A$. Given $S$, $A$ and two homomorphisms $\omega : H(S, A) \to \M_{g,b}$ and $\widehat\omega : \pi_1(S - A) \to \widehat\Map_{g,b}$ such that $\omega$ sends meridians to Dehn twists and $\mu\circ\omega = \widehat\omega_{|H(S,A)}$, there exists a Lefschetz fibration $f : V \to S$ with regular fiber $F_{g,b}$ such that $A_f = A$, $\omega_f = \omega$ and $\widehat\omega_f = \widehat\omega$. Moreover, 
such $f$ is unique up to equivalence by our relative minimality assumption, unless $S$ is closed and the fiber is a sphere or a torus (because in such cases the diffeomorphisms group of the fiber is not simply connected \cite{Gr73}).
In particular, if $S$ has boundary the 4-manifold $V$ is determined up to orientation-preserving diffeomorphisms. In \cite{APZ2011} we give a very explicit construction of $f$ starting from the monodromy sequence.

If $q : G \to S$ is $f$-regular with respect to a Lefschetz fibration $f : V \to S$ then the pullback $\widetilde f = q^*(f)$ satisfies $A_{\widetilde f} = q^{-1}(A_f)$, $\omega_{\widetilde f} = \omega_f \circ q_*$ and $\widehat\omega_{\widetilde f} = \widehat\omega_f \circ q_*$, where $q_* : \pi_1(G- A_{\widetilde f}) \to \pi_1(S - A_f)$ is the homomorphism induced by the restriction $q_| : G - A_{\widetilde f} \to S - A_f$. The base points are 
understood and are chosen so that $q(*') = *$ with $*'$ in the domain and $*$ in the codomain.

\begin{remark}
	The $f$-regularity of $q$ implies that $q_*(H_{\widetilde f}) \subset H_f$.
\end{remark}

Let $a \in A_f$ and $a' \in q^{-1}(a)$. Then $q$ is a local diffeomorphism around $a'$. It is immediate that the sign of $a'$ as a singular value of $\widetilde f$ is given by that of $a$ multiplied by the local degree of $q$ at $a'$, in other words $\sign(a') = \deg_{a'}(q)\cdot \sign(a)$.

In order to prove Theorem~\ref{strong-univ/thm} we recall the definition of {\em stabilizations} (the reader is referred to \cite{GS99} or \cite{APZ2011} for  details).

Given a Lefschetz fibration $f : V \to B^2$ whose regular fiber $F$ has non-empty boundary, we can construct a new Lefschetz fibration $f' : V' \to B^2$ by an operation called stabilization which is depicted in Figure~\ref{stabil/fig}. The new regular fiber $F' = F \cup H^1$ is obtained by attaching an orientable 1-handle $H^1$ to $F$, and the new monodromy sequence is given by the addition to the old one of a signed vanishing cycle $c^\pm$ which crosses $H^1$ geometrically once. 

The inverse operation is called {\em destabilization} and can be applied if there is a properly embedded arc $s$ in the regular fiber $F'$ of $f'$ which meets a single vanishing cycle $c$, and it does so geometrically once. The arc $s$ is the cocore of a 1-handle of $F'$. Let $F$ be $F'$ cut open along $s$, and let the new monodromy sequence be that of $f'$ with $c^\pm$ removed (no matter whichever is the sign).

In terms of handlebody decompositions, stabilizations (resp. destabilizations) correspond to the addition (resp. deletion) of a cancelling pair of 1 and 2-handles, hence $V \cong V'$.

\begin{figure}
	\centering\includegraphics{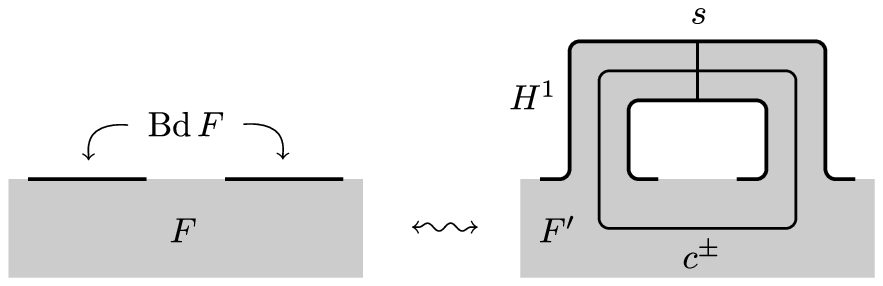}
	\caption{The (de)stabilization operation.}
	\label{stabil/fig}
\end{figure}

We end this section with the following straightforward propositions, needed in the proof of our main results.

\begin{proposition}
	Let $f : V \to S$ be a Lefschetz fibration and let $G$ be a surface. If $q_t : G \to S$, $t \in [0,1]$, is a homotopy through $f$-regular maps then $q_0^*(f) \cong q_1^*(f)$.
\end{proposition}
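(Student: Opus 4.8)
The plan is to reduce the statement to the invariance of the monodromy data under homotopy, and then to invoke the uniqueness of a Lefschetz fibration over a bounded base in terms of that data. Recall that the pullback $q_t^*(f)$ is completely described by the singular values set $A_{q_t^*(f)} = q_t^{-1}(A_f)$ together with the homomorphisms $\omega_{q_t^*(f)} = \omega_f \circ q_{t*}$ and $\widehat\omega_{q_t^*(f)} = \widehat\omega_f \circ q_{t*}$. Hence it suffices to show that, as $t$ runs over $[0,1]$, these three pieces of data change only by an isotopy of $G$ and by a change of reference fiber, both of which produce equivalent Lefschetz fibrations.

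First I would make the singular values set constant in $t$. Since each $q_t$ is $f$-regular, every $a \in A_f$ is a regular value of $q_t$, so $q_t$ is a local diffeomorphism near each point of $q_t^{-1}(a)$, and by $f$-regularity $q_t^{-1}(A_f) \subset \Int G$. Consequently the set $Q^{-1}(A_f)$, where $Q : G \times [0,1] \to S$ is defined by $Q(g,t) = q_t(g)$, is a properly embedded $1$-submanifold of $\Int G \times [0,1]$ meeting each slice $G \times \{t\}$ transversally in the finite set $A_t := q_t^{-1}(A_f)$; in other words $t \mapsto A_t$ is a smooth isotopy of finite subsets of $\Int G$. By the isotopy extension theorem I can choose an ambient isotopy $\Psi_t$ of $G$, orientation-preserving and fixing $\Bd G$, with $\Psi_0 = \id$ and $\Psi_t(A_0) = A_t$. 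Replacing $q_t$ by $q_t \circ \Psi_t$ — still a homotopy through $f$-regular maps, and satisfying $(q_t \circ \Psi_t)^*(f) = \Psi_t^*(q_t^*(f)) \cong q_t^*(f)$ because $\Psi_t$ is an orientation-preserving self-diffeomorphism — I may assume from now on that $A_t \equiv A$ is independent of $t$.

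With $A$ fixed, the restrictions $q_t| : G - A \to S - A_f$ form a genuine homotopy of maps, since $q_t(G - A) \subset S - A_f$ and $q_t(A) \subset A_f$ for every $t$. Therefore the induced homomorphisms $q_{0*}, q_{1*} : \pi_1(G - A) \to \pi_1(S - A_f)$ agree up to the change-of-basepoint isomorphism determined by the trace $\alpha(t) = q_t(*')$ of the basepoint. Composing with $\omega_f$ and $\widehat\omega_f$, the monodromy data of $q_0^*(f)$ and of $q_1^*(f)$ thus coincide up to this basepoint ambiguity, which corresponds precisely to a choice of reference regular fiber and hence to an equivalence of Lefschetz fibrations; the signs of the singular points are preserved as well, since $\sign(a') = \deg_{a'}(q_t) \cdot \sign(a)$ and the local degree $\deg_{a'}(q_t) = \pm 1$ varies continuously, hence is constant, in $t$. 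Invoking the uniqueness of a Lefschetz fibration determined by its monodromy data I then conclude $q_0^*(f) \cong q_1^*(f)$.

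The step I expect to require the most care is the basepoint bookkeeping in the last paragraph: one must check that the change-of-basepoint discrepancy between $q_{0*}$ and $q_{1*}$ genuinely translates into an \emph{equivalence} of the two pullbacks and not merely an abstract isomorphism of monodromy homomorphisms. An alternative route that sidesteps both this point and the exceptional cases of the classification is purely differential-topological: after the reduction above, the total pullback $W = \{(g,t,v) : q_t(g) = f(v)\}$ is compact and the projection $W \to [0,1]$ is a proper submersion off the singular arc $Q^{-1}(A_f)$, along which it carries the standard Lefschetz model $z_1^2 + z_2^2$ in the normal directions; a Thom--Ehresmann parametrized isotopy then transports $q_0^*(V)$ onto $q_1^*(V)$ through fiber-preserving diffeomorphisms respecting the Lefschetz structure, giving the claim in full generality.
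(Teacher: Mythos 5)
The paper states this proposition without proof, dismissing it as straightforward, so there is no argument of the author's to compare yours against; judged on its own, your proof is correct and appropriately careful. Your main route (constant-ify $A_t$ by isotopy extension, then observe that the monodromy data $A_{q_t^*(f)} = q_t^{-1}(A_f)$, $\omega_f\circ q_{t*}$, $\widehat\omega_f\circ q_{t*}$ change only by an ambient isotopy of $G$ and a change-of-basepoint conjugation, then invoke uniqueness from monodromy) is exactly the argument the author presumably has in mind, and the transversality claim that $Q^{-1}(A_f)$ is a $1$-submanifold transverse to the slices is justified because $d_gQ$ is already surjective at each preimage of $A_f$ (you should say explicitly that you take the homotopy to be smooth, as the paper works in the $C^\infty$ category). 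Two fine points deserve emphasis. First, the uniqueness-from-monodromy statement you invoke is stated in the paper with an explicit exception: it fails when the base is closed and the fiber is a sphere or a torus, and the proposition as stated allows closed $G$; so your primary argument only covers the bounded-base and non-exceptional-fiber cases, which happen to be the only ones the paper ever uses. You are aware of this, and your fallback parametrized Thom--Ehresmann argument on $W=\{(g,t,v): q_t(g)=f(v)\}$ is the one that gives the statement in full generality; it is only sketched, but it is the standard argument and the key facts (that $W\to[0,1]$ is a proper submersion even along the singular arc, because that arc projects submersively to $[0,1]$) are correctly identified. Second, the basepoint bookkeeping you flag does close up as you expect: the discrepancy between $q_{0*}$ and $q_{1*}$ is conjugation by the class of the trace $t\mapsto q_t(*')$, and composing with $\omega_f$, $\widehat\omega_f$ turns this into a global conjugation by a fixed element of $\widehat{\M}_{g,b}$, i.e.\ a change of the identification of the reference fiber with $F_{g,b}$, which is an equivalence of Lefschetz fibrations.
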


\begin{proposition}
	If $q : G \to S$ is an orientation-preserving immersion (resp. embedding) then the fibered map $\widetilde q : \widetilde V \to V$ associated to the pullback $q^*(f) : \widetilde V \to G$ is also an orientation-preserving immersion (resp. embedding).
\end{proposition}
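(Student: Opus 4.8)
The plan is to prove the final proposition, which asserts that if $q : G \to S$ is an orientation-preserving immersion (resp.\ embedding), then the fibered map $\widetilde q : \widetilde V \to V$ associated to the pullback $q^*(f) : \widetilde V \to G$ is itself an orientation-preserving immersion (resp.\ embedding). Recall that $\widetilde V = \{(g,v) \in G \times V \mid q(g) = f(v)\}$ and $\widetilde q(g,v) = v$. The argument is local and rests on two ingredients: the local structure of $\widetilde V$ as a fiber product, and the interaction of the immersion hypothesis with the Lefschetz fibration structure of $f$.

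First I would verify the immersion claim at a regular point $(g,v)$, i.e.\ where $v \notin f^{-1}(A_f)$. Here $f$ is a submersion near $v$ and $q$ is an immersion near $g$, so I would write local coordinates in which $f$ is a projection onto an open set of $S$; then $\widetilde q$ in these coordinates is modeled on the map that takes the fiber-product chart to $V$, and its differential is injective because $dq_g$ is injective on the base directions while the fiber directions of $\widetilde f$ are carried diffeomorphically onto the fiber directions of $f$ (this is exactly the fiber-preserving diffeomorphism onto fibers recorded in the Introduction). Hence $d\widetilde q_{(g,v)}$ is injective. Next I would treat a singular point $\widetilde a \in \widetilde A_f$: since $A_{\widetilde f} = q^{-1}(A_f)$ and $q$ is a local diffeomorphism near any preimage $a'$ of a singular value (as noted in the excerpt, $\operatorname{sign}(a') = \deg_{a'}(q)\cdot\operatorname{sign}(a)$, using that $q$ is an immersion between equidimensional manifolds), the local model $z_1^2 + z_2^2$ for $\widetilde f$ is obtained by composing the corresponding model for $f$ with the local diffeomorphism $q$, so near the singular fiber $\widetilde q$ is again modeled on a fiber-product chart with injective differential. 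The orientation-preserving statement follows by checking that the induced orientation on $F$ is the canonical one on both sides and that $q$ preserves orientation on the base, so the product orientation on the fiber-product is carried correctly.

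For the embedding refinement, I would argue that injectivity of $\widetilde q$ fails only if two distinct points $(g_1,v)$ and $(g_2,v)$ of $\widetilde V$ map to the same $v$, which forces $g_1 \neq g_2$ with $q(g_1) = q(g_2) = f(v)$; since $q$ is injective (being an embedding) this cannot happen, so $\widetilde q$ is injective. Properness and the homeomorphism-onto-image property then follow from compactness of $\widetilde V$ together with the local immersion statement already established, upgrading the injective immersion to an embedding.

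The main obstacle I anticipate is the analysis at the singular fibers, where $\widetilde V$ is a priori only defined as a set-theoretic fiber product and one must confirm it is a smooth $4$-manifold on which $\widetilde q$ is smooth. The point is that transversality of $q$ to $f$ (the $f$-regularity hypothesis, which holds automatically for immersions meeting $A_f$ in regular values of $q$) guarantees the fiber product is a submanifold of $G \times V$ and that the local complex model pulls back cleanly; getting the differential computation right in the Lefschetz chart $f(z_1,z_2) = z_1^2 + z_2^2$ composed with a local diffeomorphism, and reading off both injectivity and the orientation behavior simultaneously, is the delicate step. Everything else is a routine unwinding of the fiber-product definition.
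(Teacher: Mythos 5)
Your argument is correct, and the paper in fact offers no proof at all---it labels this proposition ``straightforward''---so your local fiber-product analysis is exactly the intended justification. If anything, you could streamline it by noting that $G$ and $S$ are both surfaces, so an immersion $q$ is automatically a local diffeomorphism and $T_{(g,v)}\widetilde V$ is the fiber product of tangent spaces (smoothness at singular fibers being guaranteed by $f$-regularity, since $A_f\subset R_q$ forces $dq$ to be onto where $df$ degenerates); the kernel of $d\widetilde q$ is then $\{(X,0): dq(X)=0\}=0$ at every point, regular or singular, in one stroke.
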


\section{Proofs of main results}\label{proof/sec}

We first prove the following Lemma.

\begin{lemma}\label{van-cic/lem}
	Suppose that $u : U \to S$ satisfies conditions $(1)$ and $(2)$ (resp. $(1)$ and $(2'))$ of Proposition~\ref{main/thm}. Then each class of $\cal C_{g,b}$ can be represented by a vanishing cycle (resp. a vanishing cycle of prescribed sign) in any monodromy sequence of $u$.
\end{lemma}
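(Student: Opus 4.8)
The plan is to show that the multiset of pairs (class in $\cal C_{g,b}$, sign) carried by the vanishing cycles is an invariant of $u$, independent of the chosen Hurwitz system and trivialization, and then to read off the two assertions directly from conditions $(2)$ and $(2')$. First I would recall that any two monodromy sequences of $u$ are obtained from one another by a finite composition of three elementary moves: Hurwitz moves, which leave one of a consecutive pair $c_i, c_{i+1}$ unchanged and replace the other by its image under the Dehn twist along the unchanged one; a global change of the reference trivialization, which applies a fixed $\psi \in \widehat\M_{g,b}$ simultaneously to every $c_j$; and, when $S$ is not a disk, conjugation of the meridian basis by an element $\widehat\omega_u(\alpha)$ with $\alpha \in \pi_1(S)$, arising from transporting the defining disk $D \supset A_u$ around a loop of $S$.

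The key observation I would make is that each of these moves acts on an individual vanishing cycle $c_j$ by an \emph{orientation-preserving} self-diffeomorphism of $F_{g,b}$ (a neighbouring Dehn twist, the fixed $\psi$, or $\widehat\omega_u(\alpha)$, respectively). Since an orientation-preserving homeomorphism of $F_{g,b}$ carries a homologically essential simple closed curve to a curve in the same class of $\cal C_{g,b}$, the class $[c_j]$ is preserved by every move. Moreover each move conjugates the associated Dehn twist without altering its handedness, so the sign $\epsilon_j$ of the corresponding singular point is preserved as well. Thus the multiset $\{([c_j], \epsilon_j)\}_{j=1}^n$ depends only on $u$. Assertion $(2)$ says exactly that every class of $\cal C_{g,b}$ occurs as a first coordinate in this invariant multiset, and $(2')$ that every class occurs there with both signs; consequently the same holds for every monodromy sequence, which is precisely the claim and its prescribed-sign refinement.

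The role of condition $(1)$, as I see it, is to guarantee that the three moves above really do relate \emph{all} monodromy sequences of $u$, so that the invariance suffices to transfer the conclusion to an arbitrary one. Over a disk this is the classical fact that Hurwitz systems are connected by Hurwitz moves and global conjugation. Over a general $S$ I would use that surjectivity of $\omega_u$ makes the vanishing-cycle twists $\gamma_1, \dots, \gamma_n$ generate $\M_{g,b}$, while surjectivity of $\omega_u^\sigma$ realizes every permutation of the labelled boundary components through $\widehat\omega_u(\pi_1(S))$; together these fill out $\widehat\M_{g,b}$, so that the conjugations needed to pass between two choices of defining disk all lie among the listed class- and sign-preserving moves. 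I expect the main obstacle to be exactly this reconciliation step for non-simply-connected $S$: one must check carefully that the passage between two monodromy sequences decomposes into these moves with no residual operation capable of changing the topological type of a vanishing cycle, and, in the signed case, that the opposite-sign representatives furnished by $(2')$ are transported into the given sequence without their signs being flipped.
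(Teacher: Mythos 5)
Your strategy --- showing that the multiset of pairs $([c_j],\epsilon_j)$ is invariant under the moves relating monodromy sequences and then ``reading off'' conditions $(2)$ and $(2')$ --- is genuinely different from the paper's argument, and it has a gap precisely at the reading-off step. Condition $(2)$ says that each class of $\cal C_{g,b}$ is represented by a vanishing cycle of $u$, i.e.\ by the twist curve of \emph{some} meridian of $A_u$; it does not say that such a meridian belongs to any Hurwitz system. When $S$ is not simply connected, a meridian of the form $\tau\xi_i\tau^{-1}$ with $\tau$ wrapping around a handle of $S$ cannot be completed to a Hurwitz system (whose members must all lie in a disk containing $A_u$), so the assertion ``every class occurs as a first coordinate of the invariant multiset'' is exactly what the lemma asks you to prove, not a restatement of $(2)$. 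The paper closes this in one step: any meridian $\xi$ is conjugate in $\pi_1(S-A_u)$ to some member $\xi_i$ of the given Hurwitz system, say $\xi=\tau\xi_i\tau^{-1}$; setting $\phi=\widehat\omega_u(\tau)$ one gets $\widehat\omega_u(\xi)=\phi^{-1}\circ\widehat\omega_u(\xi_i)\circ\phi$, so $c_i=\phi(c)$ lies in the same class of $\cal C_{g,b}$ (as $\phi$ is orientation-preserving) and corresponds to a singular value of the same sign. A separate easy case is needed when $\widehat\omega_u(\xi_i)$ is the identity of $\widehat\M_{g,b}$ --- i.e.\ when the twists are about boundary-parallel curves, which are killed by forgetting the boundary parametrization --- since then the conjugation relation carries no information; the paper argues directly that both $c$ and $c_i$ are then boundary-parallel, hence equivalent. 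This conjugation step is the entire content of the lemma, and your proposal does not carry it out.

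Two further points. You misidentify the role of condition $(1)$: the conjugating mapping classes $\widehat\omega_u(\tau)$ are orientation-preserving for \emph{any} Lefschetz fibration, so no surjectivity of $\omega_u$ or $\omega_u^\sigma$ is needed to preserve classes and signs, and in fact the paper's proof of the lemma never invokes condition $(1)$ at all --- it appears in the hypotheses only because the lemma is a step towards Proposition~\ref{main/thm}. Moreover, the classification of all monodromy sequences of $u$ over a general bounded surface by Hurwitz moves, global change of trivialization and $\pi_1(S)$-conjugations is itself a nontrivial claim that you leave unproved (and flag as the main obstacle); it is also unnecessary, since the direct conjugation argument above handles an arbitrary meridian at once without comparing Hurwitz systems to each other.
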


\begin{proof}
	Let $(\xi_1, \dots, \xi_n)$ be a Hurwitz system for $A_u \subset S$ and let $\mathfrak c \in \cal C_{g,b}$. There is a meridian $\xi$ of $A_u$ such that $\omega_u(\xi)$ is a Dehn twist (resp. a Dehn twist of prescribed sign) around a curve $c \in \mathfrak c$. It is well-known that $\xi$ is conjugate to some $\xi_i$, hence $\xi = \tau \xi_i \tau^{-1}$ for some $\tau \in \pi_1(S - A_u)$. Let $c_i$ be the vanishing cycle of $\gamma_i$.
	
	Put $\phi = \widehat\omega_u(\tau)$, $\gamma = \widehat\omega_u(\xi)$ and $\gamma_i = \widehat\omega_u(\xi_i)$. We get $\gamma =\phi^{-1} \circ \gamma_i \circ \phi$ (because the standard right to left composition rule of maps differs from that in the fundamental group). There are two cases depending on whether $\gamma_i$ is or not the identity.
	
	If $\gamma_i$ is the identity then $\gamma$ is also the identity. It follows that $c$ and $c_i$ are boundary parallel (by the relatively minimal assumption they cannot be homotopically trivial), hence $c_i \in \mathfrak c$. If $\gamma_i$ is not the identity then $c_i =\phi(c) \in \mathfrak c$ \cite{W99}.
\end{proof}

We need also the following definition which gives a generalization of the notion of meridian.

\begin{definition}\label{immers-merid/def}
An immersed meridian for a finite subset $A \subset \Int S$ is an element of $\pi_1(S- A)$ which can be represented by the oriented boundary of an immersed disk $B \subset S$ such that $\#(B \cap A) = 1$.
\end{definition}

The immersed meridians of $A$ are precisely the conjugates in $\pi_1(S -A)$ of the meridians.

\begin{proof}[Proof of Proposition~\ref{main/thm}]
	We consider first the case of  universal Lefschetz fibrations.
	
	\noindent{\em `If' part.}
	Suppose that $u$ satisfies  conditions $(1)$ and $(2)$ of the statement. Consider a Lefschetz fibration $f : V \to G$  with regular fiber $F_{g,b}$, where $G$ is an oriented surface with non-empty boundary. We  are going to show  that $f \cong q^*(u)$ for some $u$-regular map $q : G \to S$. Without loss of generality we can assume $\Bd S \ne \emptyset$.
	
	There is a handlebody decomposition of $G$ with only one 0-handle $G^0$ and $l \geq 0$ 1-handles $G^1_i$, so $G = G^0 \cup G_1^1 \cup \cdots \cup G^1_l$. We can assume that $A_f \subset G^0$.
	
	Fix base points $* \in \Bd S$ and $*'\in \Bd G$ and let $\{\xi_1, \dots, \xi_n\}$ be a Hurwitz system for $A_u = \{a_1, \dots, a_n\} \subset S$. Fix also a set of free generators $\{\zeta_1,\dots, \zeta_k, \eta_1, \dots, \eta_l\}$ for $\pi_1(G - A_f)$ with $(\zeta_1,\dots, \zeta_k)$ a Hurwitz system for $A_f \subset G$. We assume that the $\zeta_i$'s are represented by meridians contained in $G^0$ and that $\eta_i$ is represented by an embedded loop (still denoted by $\eta_i$) which meets the 1-handle $G^1_i$ geometrically once and does not meet any other $G_j^1$ for $j \ne i$ as in Figure~\ref{surface/fig}. In this figure the 1-handles $G^1_i$'s are contained in the white lower box. We assume also that $\eta_i \cap \eta_j = \{*'\}$ for $i \ne j$.
	
	\begin{figure}
		\centering\includegraphics{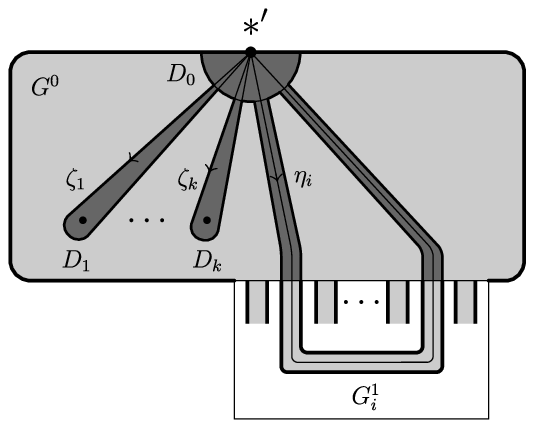}
		\caption{The generators $\zeta_i$'s and $\eta_i$'s.}
		\label{surface/fig}
	\end{figure}
	
	There are disks $D_1, \dots, D_k \subset G$ as those depicted in dark grey in the same Figure~\ref{surface/fig} such that $\Bd D_i \subset G - A_f$ represents $\zeta_i$ as a loop and $D_i \cap D_j = \{ *' \}$ for all $i \ne j$. There is also a disk $D_0 \subset G - A_f$ which is a neighborhood of $*'$ such that $D_0 \cap D_i \cong B^2$ and $D_0 \cap \eta_i \cong [0,1]$ for all $i$. Then $D = D_0 \cup D_1 \cup\dots\cup  D_k$ is diffeomorphic to $B^2$ (up to smoothing the corners).
	
	It follows that $G$ is diffeomorphic to the surface $G'$ obtained from $D$ by the addition of orientable 1-handles $G'_1, \dots, G'_l$ where $G'_i$ has attaching sphere the endpoints of the arc $\eta_i\cap D_0$ for any $i = 1, \dots, l$.
	
	Consider the Dehn twists $\gamma_i^{\epsilon_i} =\omega_u(\xi_i)$ and $\delta_i^{\sigma_i} = \omega_f(\zeta_i)$ around respectively the vanishing cycles $c_i$ (for $u$) and $d_i$ (for $f$), where $\epsilon_i, \sigma_i = \pm 1$ ($\gamma_i$ and $\delta_i$ are assumed to be right-handed).
	
	By Lemma~\ref{van-cic/lem} $d_i$ is equivalent in $\cal C_{g,b}$ to some $c_{j_i}$. It follows that $\delta_i = \lambda_i^{-1} \circ\gamma_{j_i} \circ \lambda_i$ for some $\lambda_i \in \widehat\M_{g,b}$ which sends $d_i$ to $c_{j_i}$ \cite{W99} (note that this composition is well-defined in $\M_{g,b}$ as the isotopy class of $\bar\lambda^{-1}_i \circ \bar\gamma_{j_i} \circ \bar\lambda_i$ where $\bar\lambda_i$ and $\bar\gamma_{j_i}$ are representatives of $\lambda_i$ and $\gamma_{j_i}$ respectively). 
	
	Since $\omega_u$ and $\omega^\sigma_u$ are surjective, $\widehat\omega_u$ is also surjective and so there is $\alpha_i \in \pi_1(S - A_u)$ such that $\lambda_i = \widehat\omega_u(\alpha_i)$. It follows that $\delta_i = \omega_u(\beta_i^{\epsilon_{j_i}})$ where $\beta_i = \alpha_i \xi_{j_i} \alpha_i^{-1}$ is an immersed meridian of $a_{j_i}$.
	
	So $\beta_i$ can be represented by the oriented boundary of an immersed disk $B_i \subset S$ which intersects $A_u$ only at $a_{j_i}$. The $B_i$'s can be chosen so that for a suitable embedded disk $B_0 \subset S - A_u$ which is a neighborhood of $*$, we have $B_0 \cap B_i \cong B^2$ and $B_0 \cap B_i \cap B_j = \{*\}$ for all $i \ne j \geq 1$.
	
	Now take $\eta'_i \in \pi_1(S - A_u)$ such that $\widehat\omega_u(\eta'_i) = \widehat\omega_f(\eta_i)$ for $i = 1,\dots, l$. We represent $\eta'_i$ by a transversely immersed loop (still denoted by $\eta'_i$) in $S - A_u$ such that $B_0 \cap \eta_i' \cong [0,1]$ and $B_0 \cap \eta_i' \cap \eta_j' = \{*\}$ for $i \ne j$.
	
	Consider the moves $t$, $t'$ and $t''$ of Figure~\ref{permutation/fig}, where $t$ acts on pairs $(B_i, B_j)$, $t'$ on pairs $(B_i, \eta'_j)$, and $t''$ on pairs $(\eta'_i, \eta'_j)$. These moves allow us to make the $B_i$'s and the loops $\eta'_i$'s intersect $B_0$ in the same order as the $D_i$'s and the $\eta_i$'s do with $D_0$.
	
	\begin{figure}
		\centering\includegraphics{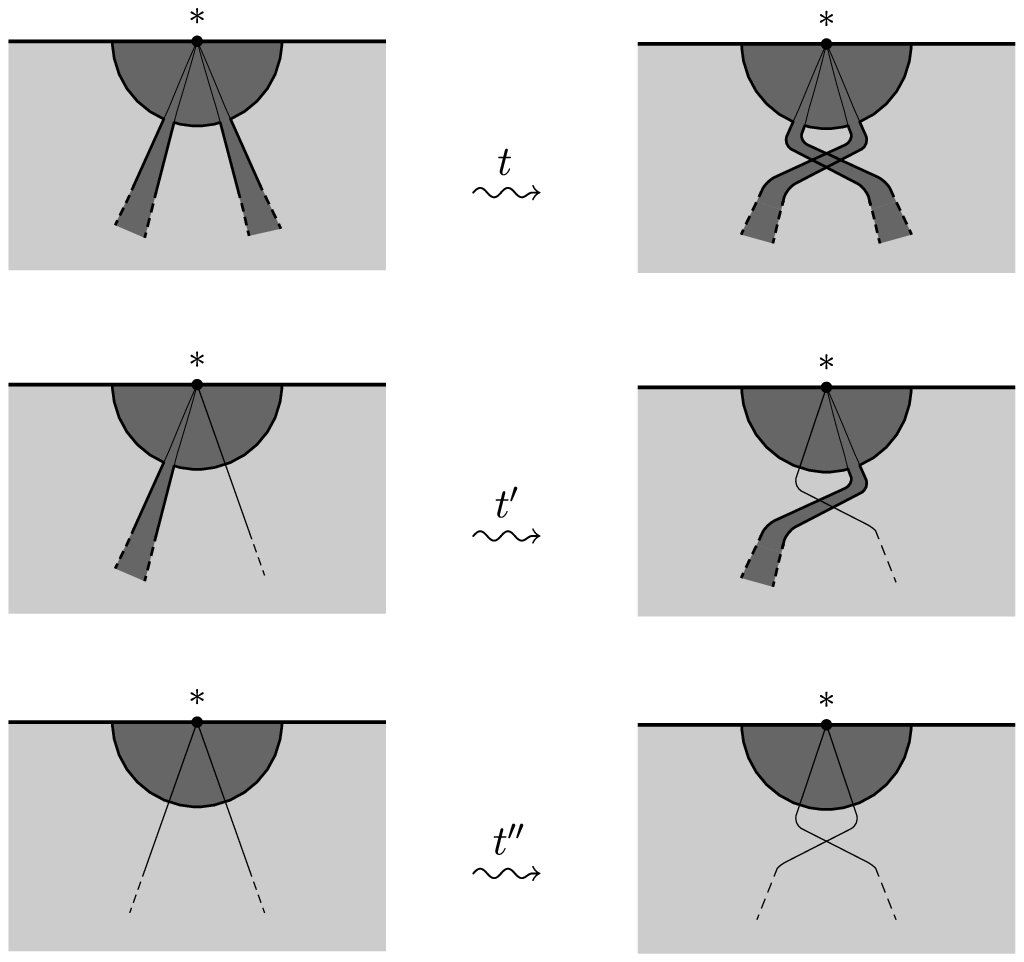}
		\caption{The permuting moves.}
		\label{permutation/fig}
	\end{figure}
	
	Take a map $\overline q : G' \to S$ which sends $D_0$ to $B_0$ diffeomorphically, immerses $D_i$ onto $B_i$ by preserving the orientation, and immerses the 1-handle $G'_i$ onto a regular neighborhood of $\eta'_i \subset S - A_u$ for all $i$. Assume also that $\overline q(A_f) = A_u$ and $\overline q(*') =*$. It follows that $\overline q$ is a $u$-regular immersion and that the homomorphism $\overline q_* : \pi_1(G' - A_f) \to \pi_1(S - A_u)$ induced by the restriction $\overline q_| : G' -A_f \to S - A_u$ satisfies $\overline q_*(\zeta_i) =\beta_i$ and $q_*(\eta_i) = \eta'_i$ for all $i \geq 1$.
	
	Now fix identifications $D'_i = \Cl(D_i - D_0) \cong [-1, 1] \times [-1, 1]$ for $i \geq 1$, such that the ordinate is 1 along the arc $D_0 \cap D_i'$ and such that the singular value $D_i \cap A_f$ has coordinates $\left(0, -{1 \over 2}\right)$.
	Let $r_i : G' \to G'$ be defined by the identity outside $D'_i$ and by the map of $D_i'$ to itself given by $r_i(t_1,t_2) = (t_1 t_2, t_2)$ up to the above identification. So $r_i$ shrinks a proper arc of $D_i'$ to a point, preserves the orientation above this arc and reverses the 
	orientation below it, as depicted in Figure~\ref{reverse/fig}. Moreover, $r_i(A_f) = A_f$ and the homomorphism induced by the restriction $r_{i*} : \pi_1(G' - A_f) \to \pi_1(G' - A_f)$ satisfies $r_{i*}(\zeta_i) = \zeta_i^{-1}$, $r_{i*}(\zeta_j) = \zeta_j$ for $j \neq i$, and $r_{i*}(\eta_j) = \eta_j$ for all $j$.

	\begin{figure}
		\centering\includegraphics{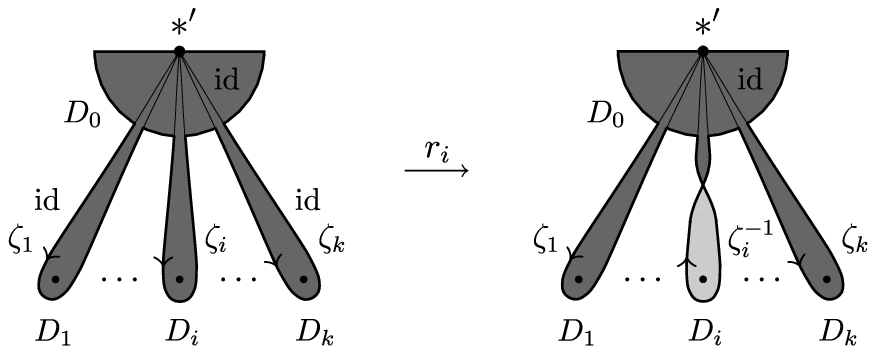}
		\caption{The twisting map $r_i$ on $D$.}
		\label{reverse/fig}
	\end{figure}
	
	Let $I = \{i_1, \dots, i_m\} \subset \{1, \dots, k\}$ be the set of those $i$ 
	such that $\epsilon_{j_i} \sigma_i  = -1$ and put $q = \overline q \circ r_{i_1} \circ \cdots \circ r_{i_m} : G \cong G' \to S$ (with $q = \overline q$ if $I$ is empty). Then $q$ is $u$-regular, $q^{-1}(A_u) = A_f$, $\omega_f = \omega_u \circ q_*$ and $\widehat\omega_f = \widehat\omega_u \circ q_*$ where $q_* : \pi_1(G - A_f) \to \pi_1(S - A_u)$ is induced by the restriction of $q$. It follows that $f \cong q^*(u)$.
	
	\medskip
	\noindent{\it `Only if' part.}
	Let $u : U \to S$ be universal with regular fiber $F_{g,b}$. Consider a Lefschetz fibration $f : V \to G$ with the same regular fiber and which satisfies the conditions $(1)$ and $(2)$. There is a $u$-regular 
	map $q : G \to S$ such that $q^*(u) = \widetilde f \cong f$. Then $\widetilde f$ satisfies the conditions $(1)$ and $(2)$ of the statement. Since $\omega_{\widetilde f} = \omega_u \circ q_*$ and $\omega^\sigma_{\widetilde f} = \omega^\sigma_u \circ q_*$ we obtain that $\omega_u$ and $\omega_u^\sigma$ are surjective.
		
	Consider now a class $\mathfrak c \in \cal C_{g,b}$. We can find a meridian $\zeta \in \pi_1(G - A_{\widetilde f})$ such that $\omega_{\widetilde f}(\zeta)$ is a Dehn twist around a curve $c \in \mathfrak c$. Since $q$ is $u$-regular, $q_*(\zeta) \in \pi_1(S - A_u)$ is an immersed meridian of $A_u$, hence $q_*(\zeta) = \alpha \xi \alpha^{-1}$ for a meridian $\xi$ of $A_u$ and for some $\alpha \in \pi_1(S - A_u)$. 

It follows that $\lambda = \widehat\omega_u(\alpha)$ satisfies $\lambda^{-1} \circ \omega_u(\xi) \circ \lambda = \omega_{\widetilde f}(\zeta)$ and so $\omega_u(\xi)$ is a Dehn twist around $\lambda(c) \in \mathfrak c$. Then $\mathfrak c$ can be represented by a vanishing cycle of $u$.
	
	\medskip
	
	The case of strongly Lefschetz fibrations can be handled similarly by tracing the same line of the previous proof. 
	We just give an idea of the `if' part:  if conditions $(1)$ and  $(2')$ are satisfied  then  the  $c_{j_i}$'s in the proof  of the first part can be chosen so that $\epsilon_{j_i} = \sigma_i$. Then the set $I$ defined above is empty and so $q$ is an orientation-preserving $u$-regular immersion such that $f = q^*(u)$.
	
	Finally, the last part of  the proposition  follows  since $\#\, \cal C_{g,b} = 1$ for $b \in \{0, 1\}$.
\end{proof}

\begin{proof}[Proof of Theorem~\ref{strong-univ/thm}]
	We consider three cases, depending on the values of $g$ and of $b$. 
	
	\noindent{\em Case I: $g \geq 2$ and $b = 1$.} Consider the Lefschetz 
	fibration $u_{g,1} : U_{g,1} \to B^2$ with regular fiber $F_{g,1}$ and with 
	monodromy sequence given by the $2g + 1$ signed vanishing cycles $(b_1^{-}$, $b_2$, $a_1,\dots, a_g, c_1^{-}, c_2, \dots, c_{g-1})$ depicted in Figure~\ref{lf/fig}, where a Hurwitz system is understood. In this figure the surface $F_{g,1}$ is embedded in $\R^3$ as part of the boundary of a standard genus $g$ handlebody.
	
	\begin{figure}
		\centering\includegraphics{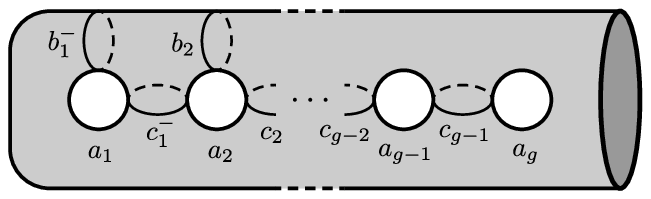}
		\caption{The Lefschetz fibration $u_{g,1}$ for $g \geq 2$.}
		\label{lf/fig}
	\end{figure}

	By a theorem of Wajnryb \cite{W99} $\M_{g,1}$ is generated by the $2g + 1$ Dehn twists $\alpha_i$, $\beta_i$ and $\gamma_i$ around the curves $a_i$, $b_i$ and $c_i$ respectively.
	It follows that $\omega_{u_{g,1}}$ is surjective and so $u_{g,1}$ is strongly universal by Proposition~\ref{main/thm}. 
	
	Now we analyze the 4-manifold $U_{g,1}$.
	In Figure~\ref{lefschetz2/fig} we pass from $u_{i,1}$ to $u_{i-1,1}$ in a two step destabilization process (first destabilize $a_i$ by the arc $s_i$ and then destabilize $c_{i-1}$ by the arc $s'_{i-1}$). This operation can be done whenever $i > 2$, and so by induction we can assume that $g=2$. In other words $U_{g,1} \cong U_{2,1}$ for $g > 2$.

	\begin{figure}
		\centering\includegraphics{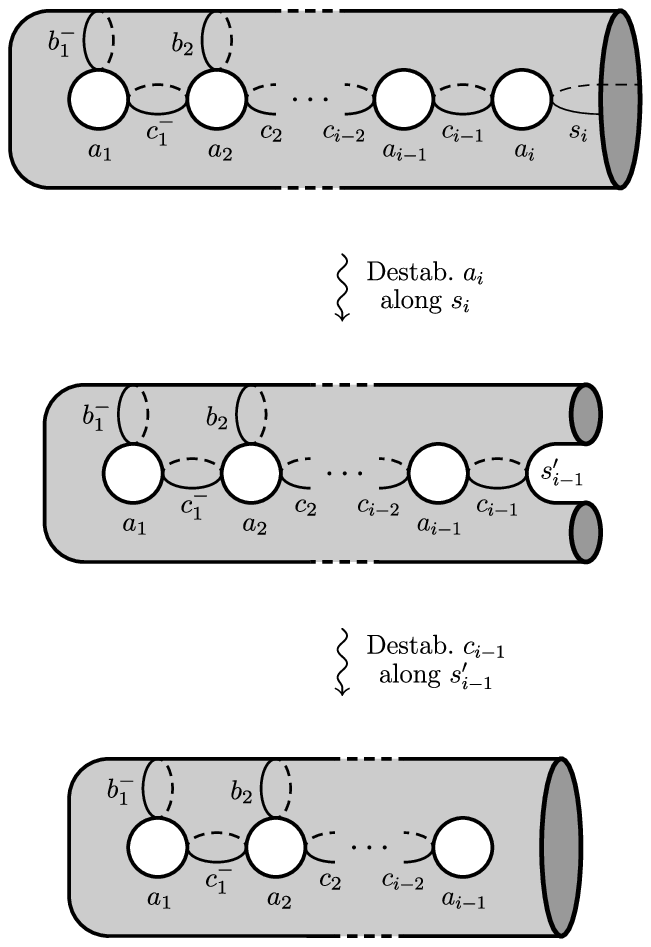}
		\caption{Simplifying $u_{i,1}$ for $i \geq 2$.}
		\label{lefschetz2/fig}
	\end{figure}
	
In Figure~\ref{lefschetz3/fig} we give some more destabilizations (first of $a_2$ along $s_2$ and then of $a_1$ along $s$), and finally we get the Lefschetz fibration depicted in the left lower part of the same figure. This has fiber $F_{0,3}$ and three boundary parallel vanishing cycles (two negative and one positive).

	\begin{figure}
		\centering\includegraphics{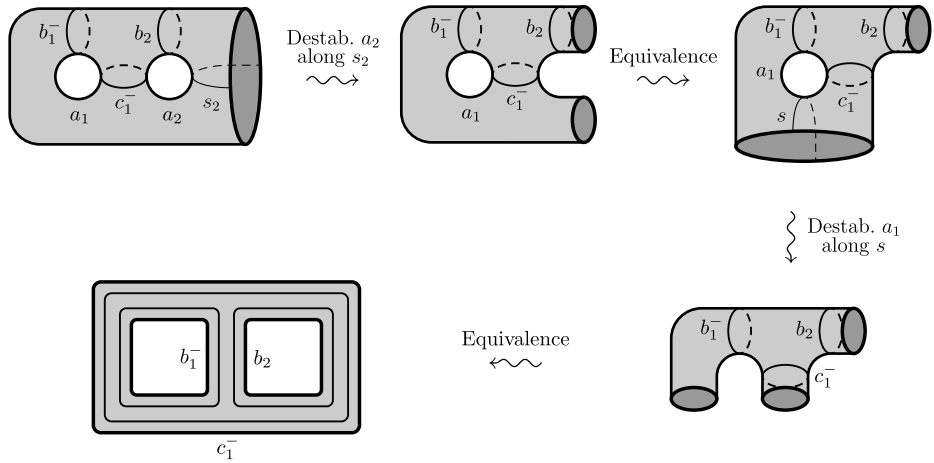}
		\caption{Simplifications of $u_{2,1}$.}
		\label{lefschetz3/fig}
	\end{figure}
	
	So a Kirby diagram for $U_{g,1}$ is that depicted in Figure~\ref{kirby/fig}, 
	which by a straightforward Kirby calculus argument can be recognized to be $M(O,1)$ (slide the outermost 2-handle over that with framing $-1$ so that the 
	latter cancels and by another simple sliding and deletion we get the picture 
	for $M(O,1)$ in the right side of Figure~\ref{kirby/fig}).
	
	\begin{figure}
		\centering\includegraphics{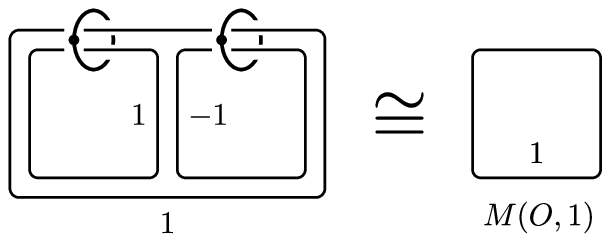}
		\caption{The manifold $U_{g, 1}\cong M(O,1)$ for $g \geq 2$.}
		\label{kirby/fig}
	\end{figure}

	\medskip
	
	\noindent{\em Case II: $g = b = 1$.} Consider the Lefschetz fibration 
	$u_{1,1} : U_{1,1} \to B^2$ with regular fiber $F_{1,1}$ and with monodromy 
	sequence $(a, b^-)$ depicted in Figure~\ref{torus/fig}.
	By \cite{W99} $\cal M_{1,1}$ is generated by the two Dehn twists $\alpha$ and $\beta$ around the curves $a$ and $b$ respectively and so Proposition~\ref{main/thm} implies that $u_{1,1}$ is strongly universal. 
	
	\begin{figure}
		\centering\includegraphics{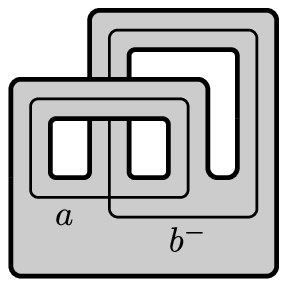}
		\caption{The Lefschetz fibration $u_{1,1}$.}
		\label{torus/fig}
	\end{figure}

	By a double destabilization we get a Lefschetz fibration with regular fiber $B^2$ and without singular values, hence $U_{1,1}$ is diffeomorphic to $B^2 \times B^2 \cong B^4$.
	
	\medskip
	
	\noindent{\em Case III: $(g,b) = (1,0)$.} Let $u_{1,0} : U_{1,0} \to B^2$ be 
	the Lefschetz fibration with fiber $F_{1,0} = T^2$ and with monodromy sequence $(a, b^-)$ depicted in Figure~\ref{torus-mon/fig}. Then $\omega_{u_{1,0}}$ is 
	surjective and so $u_{1,0}$ is strongly universal by Proposition~\ref{main/thm}.
	
	\begin{figure}
		\centering\includegraphics{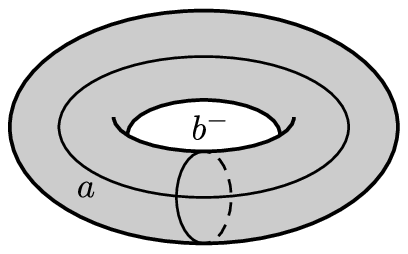}
		\caption{The Lefschetz fibration $u_{1,0}$.}
		\label{torus-mon/fig}
	\end{figure}
	
	Consider $F_{1,1} \subset \Bd U_{1,1} \cong S^3$ as the fiber of $u_{1,1}$ 
	over a point of $S^1 = \Bd B^2$. So $K = \Bd F_{1,1}$ is a knot in $S^3$. Moreover, by 
	pushing off $K$ along $F_{1,1}$ we get the framing zero (in terms of linking 
	number), since $F_{1,1}$ is a Seifert surface for $K$.
	Therefore the addition of a 2-handle to $B^4$ along $K$ with framing zero 
	produces $U_{1,0}$ \cite{GS99}. In Figure~\ref{kirby-M/fig} is depicted a Kirby diagram for $U_{1,0}$.
	This and the next three figures are referred to the 
	blackboard framing, namely that given by a push off along the 
	blackboard plus the extra full twists specified by the number near the knot. 
	Note that in Figure~\ref{kirby-M/fig} the blackboard framing coincides with 
	that of the fiber $F_{1,1}$.

	\begin{figure}
		\centering\includegraphics{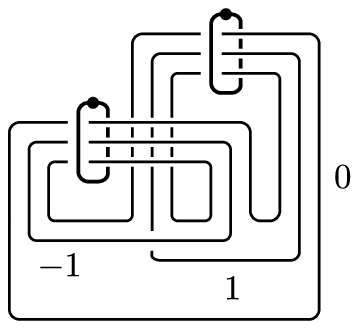}
		\caption{The manifold $U_{1,0}$.}
		\label{kirby-M/fig}
	\end{figure}
	
	Now we apply the moves $t_+$  and $t_-$ of Figures~\ref{trick/fig} and 
	\ref{trick2/fig} respectively, where the thick arcs with framing zero belong 
	to the same knot (which is assumed to be unlinked with the thin one) with the 
	orientations indicated in these figures. Such moves are proved by Kirby 
	calculus in the same figures.
	
	\begin{figure}
		\centering\includegraphics{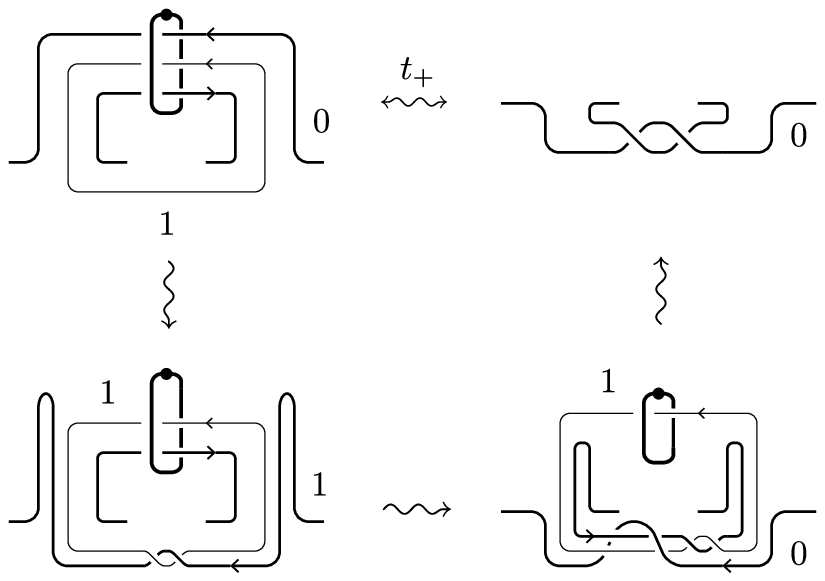}
		\caption{The move $t_+$.}
		\label{trick/fig}
	\end{figure}
	
	\begin{figure}
		\centering\includegraphics{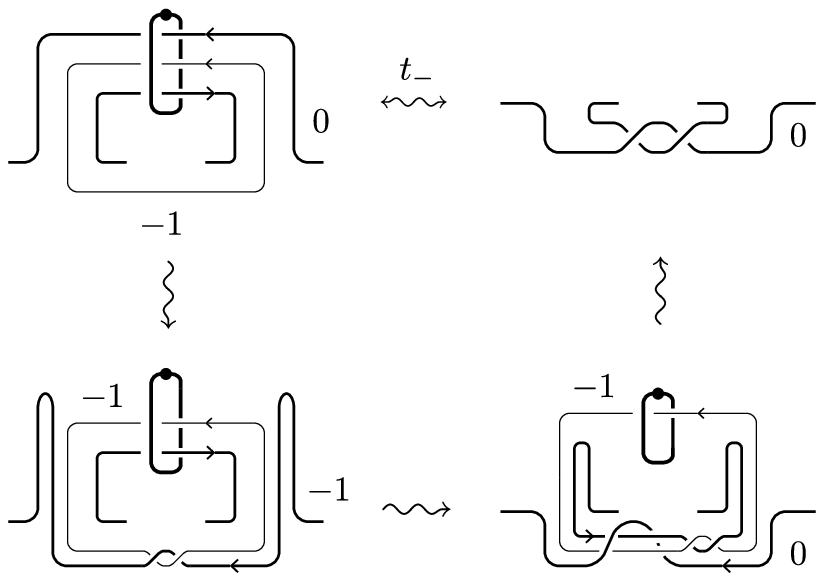}
		\caption{The move $t_-$.}
		\label{trick2/fig}
	\end{figure}
	
	We get Figure~\ref{torus-simpl/fig} where the two kinks in the second stage are 
	opposite and so do not affect the framing. The last stage, which gives the 
	figure eight knot, is obtained by framed isotopy.
	
	Since the writhe of the figure eight knot is zero, the blackboard framing zero is the same as linking number zero, and this concludes the proof.
\end{proof}

\begin{figure}
	\centering\includegraphics{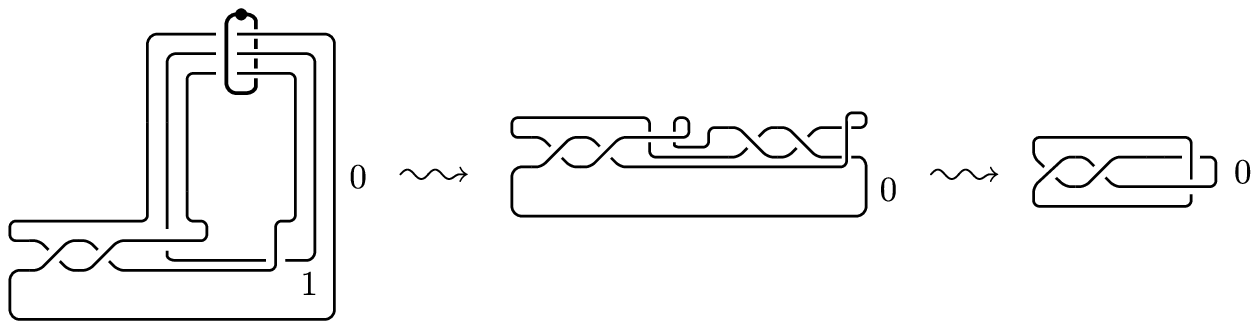}
	\caption{Kirby diagrams for $U_{1,0}$.}
	\label{torus-simpl/fig}
\end{figure}

\begin{proof}[Proof of Corollary~\ref{immers/cor}]
	By a theorem of Harer \cite{Harer1979} any 4-dimensional 2-handlebody $V$ admits a Lefschetz fibration $f : V \to B^2$ with regular fiber $F_{g,1}$ for some $g \geq 1$  (see also \cite{LP01} or \cite{EF06} for different proofs). Up to stabilizations we can assume $g \geq 2$.
	
	Theorem~\ref{strong-univ/thm} implies that $f \cong q^*(u_{g,1})$ for some orientation-preserving $u_{g,1}$-regular immersion $q : B^2 \to B^2$. Then we get a fibered immersion $\widetilde q : V \to U_{g,1}\cong M(O, 1)$. It is well-known that $M(O, 1)$ is orientation-preserving diffeomorphic to a tubular neighborhood of a projective line in $\CP^2$. Then we can consider $M(O, 1) \subset \CP^2$, and this concludes the proof.
\end{proof}

\begin{proof}[Proof of Corollary~\ref{paral/cor}]
	Let $F_{1,b}$ be the regular fiber of $f$. If $b = 1$ the corollary follows immediately from Theorem~\ref{strong-univ/thm} since $V$ admits a fibered immersion in $U_{1,1} \cong B^4 \subset \R^4$ and hence is parallelizable.
	
	If $b \geq 2$ we consider the 4-manifold $V'$ obtained from $V$ by the 
	addition of 2-handles along all but one boundary components of the regular fiber $F_{1,b} \subset \Bd V$ with framing zero with respect to $F_{1,b}$. Then $V \subset V'$. Moreover, $f$ extends to a Lefschetz fibration $f' : V' \to B^2$ with regular fiber $F_{1,1}$ whose monodromy is obtained from $\omega_f$ by composition with the homomorphism from $\M_{1,b}$ to $\M_{1,1}$ induced by capping off by disks all but one boundaries components of $F_{1,b}$. 
	
	The non-separating assumption on the vanishing cycles of $f$ implies that $f'$ is allowable, and so $V'$ immerses in $\R^4$ by the first case.
	
	If $b = 0$, $V$ fibered immerses in the manifold $U_{1,0} \cong M(E, 0)$ of Theorem~\ref{strong-univ/thm}. We conclude by observing that $M(E,0)$ immerses in $\R^4$ as a tubular neighborhood of $B^4 \cup D$ where $D \subset \R^4 - \Int B^4$ is a self-transverse immersed disk with boundary the knot $E$.
\end{proof}

\section{Final remarks}\label{remarks/sec}

	\begin{remark}
	
	It is not difficult to see  that for $b \geq 1$
	$$\#\, \cal C_{g,b} = 
	\begin{cases}
		\displaystyle{\left\lfloor {b \over 2} \right\rfloor}, & \text{if $g = 0$} \cr
		\cr
		\displaystyle\left\lfloor {g b - g + b \over 2} \right\rfloor + 1, & \text{if $g \geq 1$}
	\end{cases}$$
	which is a lower bound for the number of singular points of a universal Lefschetz fibration with fiber $F_{g,b}$ $(\lfloor x \rfloor$ denotes the integer part of $x \in \R)$.
\end{remark}

\begin{remark}
	In order to include also the not allowable Lefschetz fibrations it suffices to replace, in Proposition~\ref{main/thm}, $\cal C_{g,b}$ with the set  of $\widehat\M_{g,b}$-equivalence classes of homotopically essential curves. The proof is very similar.
\end{remark}

In \cite{LP01} Loi and Piergallini characterized compact Stein domains of 
dimension four, up to orientation-preserving diffeomorphisms, as the total 
spaces of positive Lefschetz fibrations (meaning with only positive singular points) over $B^2$ with bounded fiber. We can express this theorem in terms of universal positive Lefschetz fibrations.

Following the notations of the proof of Theorem~\ref{strong-univ/thm} let 
$p_g : P_g \to B^2$ be the Lefschetz fibration with fiber $F_{g,1}$ and 
monodromy sequence given by $(a, b)$ for $g = 1$ and $(b_1$, $b_2$, 
$a_1,\dots, a_g, c_1, c_2, \dots, c_{g-1})$ for $g \geq 2$ as showed in Figure~\ref{univ-stein/fig}.

\begin{figure}
	\centering\includegraphics{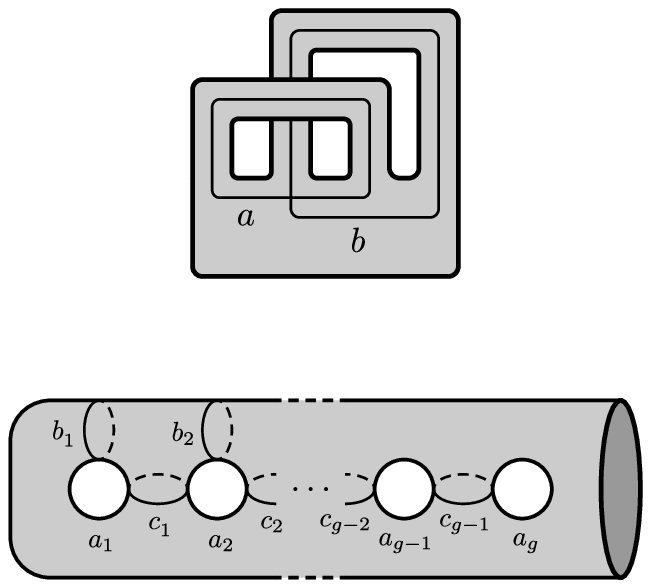}
	\caption{The positive universal Lefschetz fibrations $p_g$.}
	\label{univ-stein/fig}
\end{figure}

Then $p_g$ is universal (but not strongly universal) by Proposition~\ref{main/thm}. Moreover $P_1 \cong B^4$ and $P_g$ has the Kirby diagram depicted in Figure~\ref{kirby-stein/fig} for $g \geq 2$. That $P_g \cong M(O, -3)$ follows by the same argument used in the proof of Theorem~\ref{strong-univ/thm}.

\begin{figure}
	\centering\includegraphics{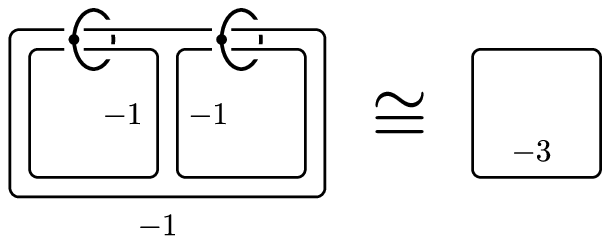}
	\caption{The 4-manifold $P_g$ for $g \geq 2$.}
	\label{kirby-stein/fig}
\end{figure}

However, $SL(p_g)$ is the set of equivalence classes of all positive Lefschetz 
fibrations with fiber $F_{g,1}$. The proof is exactly the same of Proposition~\ref{main/thm} with $\epsilon_{j_i} = \sigma_i =1$.

Of course any Lefschetz fibration with bounded fiber can be positively 
stabilized so that the fiber has connected boundary.

It follows that compact 4-dimensional Stein domains with strictly pseudoconvex 
boundary coincide, up to orientation-preserving diffeomorphisms, with the 
total spaces of Lefschetz fibrations that belong in $SL(p_g)$ for some $g \geq 
1$.


\end{document}